\newtheorem{thm}{Theorem}[section]
\newtheorem{lem}[thm]{Lemma}
\newtheorem{cor}[thm]{Corollary}
\newtheorem{prop}[thm]{Proposition}
\theoremstyle{definition}
\newtheorem{defn}[thm]{Definition}
\newtheorem{Q}[thm]{Question}
\newtheorem{rmk}[thm]{Remark}
\newtheorem{ex}[thm]{Example}
\newtheorem{theorem}{Theorem}[section]
\newtheorem{question}[theorem]{Question}
\theoremstyle{definition}
\newtheorem{remark}{Remark}
\newcommand{\eulerone}{\mathscr{E} _{1}}
\newcommand{\eulertwo}{\mathscr{E} _{2}}
\newcommand{\N}{\mathbb{N}}
\newcommand{\Z}{\mathbb{Z}}
\newcommand{\sep}[1]{\mathbf{Sep}({#1})}
\newcommand{\Sep}[1]{\mathbf{Sep}({#1})}
\newcommand{\Ends}[1]{\mathbf{Ends}({#1})}
\newcommand{\Path}[1]{\mathbf{Path}({#1})}
\newcommand{\sepmax}[1]{\mathbf{Sepmax}({#1})}
\newcommand{\HP}{\operatorname{\mathbf{0}} '} 
\renewcommand{\mid}{ : }
\newcommand{\vir}[1]{``#1''}
\newcommand{\comp}{\operatorname{\mathbf{Comp}}}
\newcommand{\G}{G}
\title{Paths, Ends and The Separation Problem for Infinite Graphs}
\author[N. Carrasco-Vargas]{Nicanor Carrasco-Vargas}
\address{Jagiellonian University, Krak\'ow, Poland. \\ {\normalfont \texttt{nicanor.vargas@uj.edu.pl}}}
\author[V. Delle Rose]{Valentino Delle Rose}
\address{Agenzia Nazionale di Valutazione del Sistema Universitario e della Ricerca, Roma, Italy. \\{\normalfont \texttt{v.dellerose@anvur.it}}}
\author[C. Rojas]{Crist\'obal Rojas}
\address{Instituto de Ingenier\'ia Matem\'atica y Computacional, Pontificia Universidad Cat\'olica de Chile, and Centro Nacional de Inteligencia Artificial,  Santiago, Chile.
\\
{\normalfont \texttt{cristobal.rojas@mat.uc.cl}}}
\keywords{Highly computable graphs, number of ends, Eulerian Path Problem for infinite graphs, arithmetical hierarchy, infinite paths}
\begin{document}
\begin{abstract}
We introduce and study the Separation Problem for infinite graphs, which involves determining whether a connected graph splits into at least two infinite connected components after the removal of a given finite set of edges. We prove that this problem is decidable for every highly computable graph with finitely many ends. Using this result, we demonstrate that König’s Infinity Lemma is effective for such graphs. We also apply it to analyze the complexity of the Eulerian Path Problem for infinite graphs, showing that much of its complexity arises from counting ends—indeed, the Eulerian Path Problem becomes strictly easier when restricted to graphs with a fixed number of ends. Under this restriction, we provide a complete characterization of the problem. Finally, we study the Separation Problem in a uniform setting (i.e., where the graph is also part of the input) and offer a nearly complete characterization of its complexity and its relationship to counting the number of ends.
\end{abstract}
\maketitle
%-----------------------------

\section{Introduction}
The study of infinite graphs from a computability perspective dates back to the seminal work of Manaster and Rosenstein \cite{Manaster-Rosenstein}, as well as Bean \cite{Bean-color, Bean-paths}, who initiated a research program aimed at understanding the algorithmic complexity of deciding classical graph properties for families of graphs with an algorithmic description. The difficulty of deciding a given property depends on the amount of information contained in the graph's description. This research program seeks to characterize this relationship using the hierarchies of algorithmic complexity provided by computability theory.

A central notion in these works is that of \emph{computable graphs}, which are infinite graphs for which there exists an algorithm to determine whether two vertices are connected by an edge. These works also introduced a stronger notion of computability for graphs, known as \emph{highly computable graphs}. Highly computable graphs form a subclass of locally finite computable graphs where, in addition, the degree of each vertex can be effectively computed. Intuitively, a highly computable graph can be thought of as a graph for which there is an algorithm capable of drawing a certified picture of any finite portion of the graph. A third class of graphs, introduced more recently \cite{khoussainov_automatic_1995}, is that of \emph{automatic graphs}. These are graphs whose vertex set and adjacency relation can be described by finite automata. Their algorithmic properties have been systematically compared to those of computable and highly computable graphs \cite{gradel_automatic_2020}.

The main point of considering these stronger notions is that some problems may become easier for graphs with stronger algorithmic descriptions, and an important goal is to understand for which problems this is indeed the case. A particularly interesting example of this phenomenon is the problem of deciding whether a graph has an \emph{Eulerian path}, namely a path which visits every edge exactly once. As it is well-known, the class of graphs admitting an Eulerian path is characterized by the parity of its vertex degrees \cite{Euler,hierholzer1873moglichkeit}. Around two-hundred years later, this characterization was extended to countably infinite graphs by Erd\H{o}s, Gr\"{u}nwald and V\'{a}zsonyi \cite{Erdos-et-al}. Recently, Kuske and Lohrey \cite{Kuske-Lohrey} studied the decidability of the Eulerian Path Problem in this general setting. They proved that the problem is $D_3^0$-complete for computable graphs (where $D_3^0$ denotes the class of sets which are expressible as differences of two $\Sigma_3^0$ sets), but only $\Pi_2^0$-complete for both the class of highly computable graphs and the class of automatic graphs. 

Another problem well known to be undecidable for infinite connected graphs is the problem of finding an infinite simple path. Although K\H{o}nig's lemma guarantees that such paths always exist, they could all be uncomputable even in a highly computable graph \cite{jockusch_pi_1972}.
\subsection{Our contributions} In this paper, we introduce and study what we see as a more fundamental problem underlying many algorithmic questions about connected graphs, including the two previously mentioned. We call it the \emph{Separation Problem}. It consists of determining whether the removal of a finite set of edges $E$ from a connected graph $G$ results in at least two distinct infinite connected components. It is closely related to the problem of counting the number of \emph{ends} of a graph (see \Cref{sec:preliminaries} for a definition). Our main contributions can be summarized as follows. We note that all graphs considered in this paper are assumed to be connected. 

\medskip

\noindent\textbf{On the complexity of the Separation Problem.} We provide a detailed analysis of the Separation Problem and its connection to the number of ends of a graph. In particular: 
        \begin{itemize}
            \item\textbf{For fixed $G$,} we show that the Separation problem is $\Pi_1^0$ in general, and that this bound is tight. However, if the graph is assumed to have finitely many ends, the problem becomes computable;
             \item\textbf{Uniformly in $G$.} To better understand the structure of the uniform version of the Separation Problem when the number of ends is finite, we introduce two specific collections of sets of edges: $\sep{G}$ and $\sepmax{G}$. The first collection contains sets $E$ that separate $G$ into at least two infinite connected components, while the second contains those sets $E$ that separate $G$ in as many distinct infinite connected components as possible. We characterize the complexity of computing each of these collections from a description of $G$, and analyze their relationship to the number of ends. At a high level, our results show that the information contained in $\sepmax{G}$ is uniformly equivalent to the combination of $\sep{G}$ and the number of ends of $G$.
        \end{itemize}
\medskip
\noindent\textbf{Applications.} We apply some of the tools developed in our analysis of the Separation Problem to both the Eulerian path problem and the problem of computing infinite paths, under restrictions on the number of ends of the graph. Our main results are:
        \begin{itemize}
            \item \textbf{On computing infinite paths.} We show that this problem is solvable for graphs with finitely many ends. More precisely, we prove that a connected, highly computable graph with finitely many ends is infinite if and only if it admits a computable infinite simple path. For these graphs we prove the stronger fact that the extension problem for finite paths to infinite simple paths is decidable. 
            \item \textbf{On the Eulerian path Problem.} We consider a version of this problem for graphs with either one or two ends (as graphs with more than two ends cannot have an Eulerian path). We characterise the resulting complexity in each case. In particular, we show that for highly computable graphs with one end, determining whether such a graph has a one-way Eulerian path is d.c.e.-complete (where a set is $d.c.e.$ if it can be written as the difference of two c.e.~set), whereas determining the existence of a two-way Eulerian path is only $\Pi_1^0$-complete. On the other hand, for highly computable graphs with two ends, we show that the problem attains precisely the $m$-degrees of $\Delta_2^0$ sets. Our results on this problem are summarized in \Cref{tab:summary}.           
        \end{itemize}

\begin{table}[h!]
    \centering
    \begin{tabular}{c|c|c}
        {}  & one-way & two-way \\
        \hline
        {} & {} & {} \\
         no bounds on the number of ends &  $\Pi_2^0$-complete &  $\Pi_2^0$-complete  \\
         {} & \cite{Kuske-Lohrey} & \Cref{thm:eulerian-path-general} \\
         
         {} & {} & {} \\
         one end & d.c.e.-complete & $\Pi_1^0$-complete \\
         {} & \Cref{thm:eulerian-path-1end} (1) & \Cref{thm:eulerian-path-1end} (2) \\
         
         {} & {} & {} \\
         two ends & -- & all $m$-degrees of $\Delta_2^0$ sets \\
         {} & {} & \Cref{thm:hardness-e_2-2fin}
    \end{tabular}
    \caption{Hardness of deciding the existence of an Eulerian path in highly computable graphs}
    \label{tab:summary}
\end{table}
We expect that the analysis of the Separation problem that we present here will be of further use in effective graph theory, and in particular in questions related to the computability of paths with different properties. As we shall see in \Cref{sec:infinite-paths}, the Separation problem is, in 
a sense, equivalent to the extension problem for infinite paths if the graphs that we consider are trees. We also mention that a restricted version of the Separation problem was recently used  \cite{nicanor-geometric-subgroup} to prove that infinite Hamiltonian paths are computable on highly computable graphs that are cubes (this is not true in the general case \cite{Bean-paths}), and in \cite{nicanor-on-bean} to show that the extension problem for Eulerian infinite paths is decidable, strengthening a classic result from \cite{Bean-paths}.  

The paper is organized as follows. In \Cref{sec:preliminaries} we briefly review the main notions on computability theory and graph theory that will be needed. In \Cref{section:separation} we introduce the Separation Problem and present a comprehensive analysis of it in terms of the complexity of computing $\sep{G}$, $\sepmax{G}$ and the number of ends of $G$.  In \Cref{sec:infinite-paths} we show that the problem of computing an infinite path can be reduced to membership in $\sep{G}$, and that this reduction becomes an equivalence if $G$ is a tree. We also illustrate some limitations of this result when more specialized paths are considered by giving a counterexample for geodesic infinite paths. Finally, all the results summarized in \Cref{tab:summary} are presented in \Cref{sec:complexity-of-Eulerianity}, along with additional small observations, such as the decidability of the Eulerian Path Problem for one-ended automatic graphs. 

\section{Preliminaries}\label{sec:preliminaries}
\subsection*{Computability theory}
The computability theoretical terminology and notation we use is quite standard and follows mostly the textbook \cite{Soare-book}. For a set $A$ we will write $\overline{A}$ to denote its complement. 

Given a Turing machine $\varphi$ we write $\varphi[s]$ to denote the outcome of running $\varphi$ for $s$ many steps. If $\varphi$ halts within $s$ many steps we write $\varphi[s] \downarrow$, otherwise we write $\varphi[s] \uparrow$. We write $\varphi \downarrow$ if $\varphi[s] \downarrow$ for some $s$, and $\varphi \uparrow$ otherwise. For a c.e.~set $W$, we denote by $W[s]$ the set of its elements enumerated within $s$ steps. We will write $(W_e)_{e\in \N}$ to mean a computable enumeration of all c.e. sets. 

For a given computable enumeration $(\varphi_e)_{e \in \N}$ of Turing machines without input, we denote by $\HP = \{e \colon \, \varphi_e \downarrow\}$ its \emph{halting set}.  Moreover, by means of oracle Turing machines, one can define the halting set \emph{relative to} any set $Y$, i.e.~the set $Y'=\{e: \varphi_e^Y \downarrow\}$. In particular, it is possible to consider $\HP '$, the halting set relative to $\HP$.
In this paper we will make an extensive use of both \emph{Turing reducibility} and $m$-\emph{reducibility} among sets \cite{Soare-book}. The notion of \emph{completeness}, capturing the property of being the hardest problem in a certain class, is defined by means of the latter reducibility: a set $A$ is $\mathcal{C}$-\emph{complete} (for some class of sets $\mathcal{C}$) if $A \in \mathcal{C}$ and $X \le_m A$ for all $X \in \mathcal{C}$. We will mostly set our problems along the levels of the \emph{arithmetical hierarchy}: to this regard, just recall that $\HP$ and $\overline{\HP}$ are, respectively, $\Sigma_1^0$-complete and $\Pi_1^0$-complete, while $\textbf{Inf} = \{e \colon \, |W_e|= \infty \}$ and $\textbf{Fin} = \{e \colon \, |W_e| < \infty \}$ are, respectively, $\Pi_2^0$-complete and $\Sigma_2^0$-complete (see, e.g., \cite[Theorem 4.3.2]{Soare-book}). $\HP'$ is another example of a 
 $\Sigma_2^0$-complete set.

Finally, recall that a set is $\Delta_2^0$ if $A \le_T \HP$.
The following well-known result characterizes the $\Delta_2^0$ sets precisely as those admitting \emph{computable approximations}.
\begin{thm}[Shoenfield's Limit Lemma \cite{limit-lemma}]\label{shoenfield}
    A set $A \subseteq \N$ is $\Delta_2^0$ if and only if $A$ has a \emph{computable approximation}, namely a computable function $f \colon \N^2 \to \{0, 1 \}$ such that, for all $n$:
    \begin{enumerate}
        \item $\lim_{s} f(n,s)$ exists (i.e. $|\{s:f(n,s) \ne f(n,s+1)\}| < \infty$), and
        \item $\lim_{s} f(n,s) = 1 \iff n \in A$.
    \end{enumerate}
\end{thm}
\subsection*{Graph theory}
We will sometimes write \vir{$A \Subset B$} as an abbreviation for \vir{$A$ is finite and $A \subseteq B$}. Unless explicitly mentioned, we consider undirected and simple graphs. The set of vertices of a graph $G$ is denoted by $V(G)$, and its set of edges by $E(G)$. The \emph{degree} $\deg_G(v)$ of a vertex $v$ is the number of edges in $G$ incident to $v$. We say that $G$ is \emph{even} when every vertex has even degree. %Each edge \emph{joins} a pair of vertices, and is said to be \emph{incident} to each of the vertices it joins. Two vertices joined by an edge are called \emph{adjacent} or \emph{neighbors}. The \emph{degree} of a vertex $v$, denoted $\deg_G(v)$, is the number of edges in $G$ incident to $v$. A graph is said to be \textit{even} when every vertex has even degree.
We consider induced subgraphs from sets of vertices and edges. Given $V\subset V(G)$, the \emph{induced} subgraph $G[V]$ has vertex set $V$ and edge set  $\{e\in E(G) : \text{$e$ joins vertices in $V$}\}$. Given $E\subset E(G)$, the \emph{induced} subgraph $G[E]$ has edge set $E$, and vertex set $\{v\in V(G) : \text{$v$ is incident to an edge in $E$}\}$. We denote by $G\smallsetminus E$ the induced subgraph $G[E(G)\smallsetminus E]$. 

A \emph{path} in $G$ is a sequence of vertices $v_0,\dots,v_n$ such that consecutive elements in the sequence are either adjacent or equal, and its \emph{length} is the number of edges that it visits. A path is \textit{simple} when no vertex is repeated. The graph $G$ is \emph{connected} when every pair of vertices can be joined by a path. A connected subgraph of $G$ is a \emph{connected component} when it is maximal for the subgraph relation. Given two vertices $v,w$ in the same connected component, their \emph{distance} $d_G(v,w)$ is the length of the shortest path in $G$ joining them.

%Let $V\subset V(G)$. The \emph{induced} subgraph $G[V]$ is the subgraph of $G$ whose vertex set is $V$, and whose edge set is $\{e\in E(G) : \text{$e$ joins vertices in $V$}\}$. We write $G\smallsetminus V=G[V(G)\smallsetminus V]$. Now let $E\subset E(G)$. The \emph{induced} subgraph $G[E]$ is the subgraph of $G$ whose edge set is $E$, and whose vertex set  is $\{v\in V(G) : \text{$v$ is incident to an edge in $E$}\}$. We write $G\smallsetminus E = G[E(G)\smallsetminus E]$. 
The \emph{number of ends} of a connected graph $G$ is the supremum of the number of infinite connected components of $G\smallsetminus E$, where $E$ ranges over all finite subsets of $E(G)$. The number of ends of a graph $G$ may be infinite. Other ways of defining the number of ends of a graph exist, but they all coincide for locally finite graphs \cite{diestel_graphtheoretical_2003}. 

We now recall standard computability notions for countably infinite graphs.
\begin{defn}
    A graph $G$ is \emph{computable} when $V(G)$ is a decidable subset of $\N$, and the adjacency relation is a decidable subset of $\N^2$. A computable graph $G$ is \emph{highly computable} when it is locally finite  and the vertex degree function $\deg_G(\cdot)$ is computable. 
\end{defn}

    We say that a sequence $(G_e)_{e \in \N}$ of highly computable graphs is \emph{uniformly highly computable} if there are computable functions $V$, $E$ and $\deg$ with two inputs, such that $V(e,v)=1$ if and only if $v \in V(G_e)$, $E(e,\langle v,w\rangle)=1$ if and only if $\{v,w\} \in E(G_e)$,  and 
    $\deg(e,v)=n$ if and only if $v$ is a vertex in $G_e$ and has degree $n$ in this graph.  We often identify a highly computable graph $G$ with the index of a Turing machine that computes $V(G)$, $E(G)$, and the function $\deg_G$, and we informally refer to this as a \emph{computer program} for $G$. %We note that determining whether a highly computable graph has at most $k$ ends is $\Pi_2^0$ complete \cite{Kuske-Lohrey}.  
    
    We will also make some observations for the case of automatic graphs, so we end this Section by briefly recalling their definition (see \cite{khoussainov_automatic_1995} for more details). Given an undirected graph $G$, we denote by $R(G)$ the adjacency relation. An undirected graph $G$ is \textbf{automatic} when the relational structure $(V(G),R(G))$ admits an \textbf{automatic presentation}: a tuple $(\Sigma,L,R_e,R_a,h)$ where $\Sigma$ is a finite alphabet, $L\subset \Sigma^\ast$ is a regular language, $h\colon L\to V(G)$ is a surjective function, and $R_e=\{(u,v) : h(u)=h(v)\}, R_a=\{(u,v) : R(G)(h(u),h(v))\}$ are regular relations. 
\section{The Separation problem}\label{section:separation}
In this section we study the Separation problem for infinite, locally finite, and connected graphs. The following definition summarizes the main objects that will be at the center of our attention. 
\begin{defn}\label{def:objects}
Let $G$ be a locally finite and connected graph. 
\begin{enumerate}
    \item $\comp_G(E)$ is the number of infinite connected components of $G\smallsetminus E$, for any $E\Subset E(G)$.  
    \item $\sep{G}$ is the collection of finite sets of edges $E\Subset E(G)$ such that $\comp_G(E)\geq 2$. In this case we say that $E$ \textit{separates} $G$.
    \item $\Ends{G}$ is the number of ends of $G$  (possibly infinite).
    \item $\sepmax{G}$ is the collection of finite sets of edges $E\Subset E(G)$ such that $\comp_G(E)$ attains a maximal value, that is, $\comp_G(E)=\Ends{G}$.
\end{enumerate}
\end{defn}
\begin{rmk} If  $1<\Ends{G}\leq \infty$ then $\sepmax{G}$ is a subset of $\sep{G}$. Also note that if $\Ends{G}=\infty$ then $\sepmax{G}$ is empty. In contrast, if $\Ends{G}=1$, then  $\sep{G}$ is empty while $\sepmax{G}$ contains all finite sets of edges, even the empty one. 
\end{rmk}
Thus the \textit{Separation Problem} corresponds to the membership problem in $\sep{G}$.   
\subsection{The complexity of $\sep{G}$ and $\sepmax{G}$ for a fixed graph $G$}
In this subsection we prove basic results about the computability of $\sep{G}$, $\sepmax{G}$, and the map $E\mapsto\comp_G(E)$, where $G$ is a highly computable connected graph with $1\leq \Ends{G}\leq\infty$.  Some of these results are also uniform on $G$ and will be used later. 

We remark that the strongest computability property that a graph $G$ may posses in this context is having a computable function $E\mapsto\comp_G(E)$. This implies that $\sep{G}$ is a decidable set. If $\Ends{G}<\infty$ then it also follows that $\sepmax{G}$ is decidable (cf. \Cref{def:objects}).

The following result shows that in general  the function $E\mapsto\comp_G(E)$ is \emph{upper-semi-computable}. 
\begin{prop}
\label{prop:comp-is-upper-computable}
There is a partial computable function which on input a (program for a) connected highly computable graph $G$ and a finite set of edges $E\subset E(G)$, computes a  sequence of natural numbers $(\comp_{G,n}(E))_{n\in\N}$ whose minimum equals $\comp_{G}(E)$.
\end{prop}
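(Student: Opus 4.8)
The plan is to exhibit the sequence $(\comp_{G,n}(E))_{n}$ as a computable family of \emph{over}-estimates of $\comp_{G}(E)$ obtained by inspecting finite balls. Fix a vertex $v_0$ and, for $r\in\N$, let $B_r=\{v\in V(G): d_G(v_0,v)\le r\}$ and $S_r=\{v: d_G(v_0,v)=r\}$; both are computable since $G$ is highly computable. For each $r$ I would form the finite graph $(G\smallsetminus E)[B_r]$ and let $a_r$ be the number of its connected components that meet the boundary sphere $S_r$. This is a finite, purely combinatorial quantity, so $r\mapsto a_r$ is computable. The intuition is that a component meeting $S_r$ is one that has \vir{not yet terminated} inside $B_r$, so $a_r$ counts the pieces that still look infinite at radius $r$: finite components will eventually stop meeting the sphere, while each genuinely infinite component keeps reaching it. The sequence we output will be a tail of $(a_r)_r$.

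Two facts drive the argument. First, removing a finite edge set from a connected graph splits it into at most $|E|+1$ components, so $G\smallsetminus E$ has finitely many components, of which finitely many, say $C_1,\dots,C_m$ with $m=\comp_{G}(E)$, are infinite. Second, letting $n_1$ be the least radius with all endpoints of all edges of $E$ inside $B_{n_1}$ (a computable number), any $G$-geodesic edge joining two vertices at distance $>n_1$ from $v_0$ cannot lie in $E$. This \vir{descending geodesic} observation has two consequences I would establish: (i) every infinite $C_i$ already meets $B_{n_1+1}$ (its closest vertex to $v_0$ is at distance $\le n_1+1$), and, being infinite and hence reaching arbitrarily far spheres, $C_i$ meets every $S_r$ with $r\ge n_1+1$ (descend the geodesic of a far vertex of $C_i$: it stays in $C_i$ down to distance $n_1+1$, so it crosses every intermediate sphere inside $C_i$); since distinct $C_i$ lie in distinct components of $G\smallsetminus E$, this yields $a_r\ge m$ for all $r\ge n_1+1$, so no term of the tail undercounts.

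For the matching bound I would show $a_r=m$ for all large $r$. Using the descending geodesic again, every vertex of $C_i$ at distance $>n_1$ is joined inside $C_i\cap B_r$ to the finite \vir{anchor} set $C_i\cap S_{n_1+1}$, while the finitely many vertices of that anchor set (and of $C_i\cap B_{n_1}$) are pairwise joined in $C_i$ by finitely many finite paths; once $r$ exceeds the radii reached by all these paths, $C_i\cap B_r$ is connected. Taking $r$ also larger than the maximal distance attained by the finitely many finite components, those contribute nothing to $S_r$, and each infinite $C_i$ contributes exactly one boundary component, so $a_r=m$ eventually. Defining $\comp_{G,n}(E):=a_{n+n_1+1}$ then gives a computable sequence all of whose terms are $\ge m$ and which equals $m$ from some point on, whence $\min_n \comp_{G,n}(E)=m=\comp_{G}(E)$, as required. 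The main obstacle is precisely this last step: controlling the \vir{pinching} of an infinite component, that is, proving that the several fingers through which $C_i$ may cross a sphere eventually merge inside the ball; the descending-geodesic anchoring to the finite set $C_i\cap S_{n_1+1}$ is what makes this both correct and effective.
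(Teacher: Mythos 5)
Your proof is correct; it realizes the same high-level idea as the paper (a computable sequence of over-estimates obtained by finite inspection, whose minimum is the true count), but through a genuinely different mechanism, so a comparison is worthwhile. The paper's proof never fixes a basepoint or a radius containing $E$: it works with the finite frontier $V$ of vertices incident to $E$, grows for each $v\in V$ the set $E_{v,n}$ of edges lying on paths of length at most $n$ from $v$ that avoid $E$, discards $v$ when $E_{v,n}=E_{v,n+1}$ (a sound certificate that the component $C_v$ is finite), glues $u$ to $v$ when $E_{u,n}\cap E_{v,n}\ne\emptyset$ (a sound certificate that $C_u=C_v$), and lets $\comp_{G,n}(E)$ be the number of classes of the transitive closure; since both certificates are sound, every infinite component keeps growing forever and never glues to a different one, so the sequence decreases to $\comp_G(E)$ with essentially no further argument. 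Your sphere criterion (count the components of $(G\smallsetminus E)[B_r]$ meeting $S_r$) is not self-justifying in the same way: you must rule out undercounting and prove eventual exactness, and that is precisely what your geodesic-descent/anchoring lemma does, using the computable threshold $n_1$ beyond which no edge of $E$ occurs, so that descending geodesics stay inside their component down to level $n_1+1$. You deploy it correctly both for the lower bound ($a_r\ge m$ for all $r\ge n_1+1$, so the tail never undercounts) and for eventual equality (each $C_i\cap B_r$ becomes a single component of the ball graph once $r$ exceeds the radii of the finitely many paths linking the anchor set $C_i\cap S_{n_1+1}$, while the finitely many finite components stop reaching $S_r$). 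What each approach buys: the paper's frontier bookkeeping turns finiteness and merging into literal c.e.\ events, needs no choice of $v_0$ or $n_1$ and no connectivity geometry, and is the formulation reused elsewhere (the algorithm $\mathcal A$ in the proof of \Cref{magic_function}, and the $\Pi_2^0$ upper bounds in \Cref{prop:counting-ends} and \Cref{thm:hardness-e-2}); your version handles merging for free (connectivity inside a finite ball), comes with an explicit radius after which the estimate is exact, and in fact yields a monotone tail, at the modest cost of relying on connectedness of $G$ for distances and on the anchoring argument, which is exactly the step where a less careful writer would go wrong.
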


\begin{proof}
Let $G$ and $E$ be as in the statement. We start by computing the set $V$ of vertices in $G\smallsetminus E$ that are incident to some edge
in $E$. Further, for each $v\in V$, we let $C_{v}$ be the connected
component of $G\smallsetminus E$ that contains $v$. For each $v\in V$ and $n\in\N$, we define $E_{v,n}$
as the set of edges in some path in $G$ with length at most $n$, starting at $v$, and that visits no edge from $E$. The sets $E_{v,n}$ can be uniformly computed (from $G$, $E$,
$v$ and $n$).  It is then clear that for all $u,v\in V$, (1) $C_{v}$ is finite exactly when $E_{v,n}=E_{v,n-1}$ for some $n$, and (2) $C_{u}=C_{v}$ when for some $n$, we have $E_{u,n}\cap E_{v,n}\ne\emptyset$.

Let $\mathcal C _n=\{E_{v,n} \mid v\in V, \ E_{v,n-1}\ne E_{v,n}\}$. We define a relation $\sim$ on $\mathcal C_n$ by  $E\sim E'\iff E\cap E'\ne\emptyset$, and then we extend $\sim$ to its transitive closure. We let $\comp_{G,n}(E)$ be the number of equivalence classes of $\sim$. A direct verification shows that $(\comp_{G,n}(E))_{n\in\N}$ is non-increasing and that $\comp_{G,n}(E)=\comp_G(E)$ for some $n$ large enough.  
\end{proof}
\begin{rmk}\label{comp-function-extra-obs}
    If $\comp_{G,n}(E)=\comp_G(E)$, then $n$ has the property that all connected components of $G\smallsetminus E$ can be determined by ``exploring'' around $E$ at distance at most $n$. 
    
    For a more precise statement, let  $u,v$ be vertices in $G\smallsetminus E$  and incident to $E$. Then they are in the same connected component of $G\smallsetminus E$ if and only if they can be connected with a path in $G\smallsetminus E$ that stays at distance at most $n$ from $E$. 
    Furthermore, $C_u$ is infinite if and only if it contains a vertex $w$ with  $d_{G\smallsetminus E}(w,u)=n$. 
\end{rmk}
We obtain the following general upper bound for the complexity of $\sep{G}$. 
\begin{prop}
\label{prop:decide-only-one-inf-comp} $\sep{G}$ is $\Pi_{1}^{0}$ for every connected and highly computable graph $G$.
\end{prop}
\begin{proof}
    Given $E\Subset E(G)$, we have $E\in\sep{G}$ if and only if for all $n\in\N$ we have $ \comp_{G,n}(E)\geq 2$. This is a $\Pi_1^0$ condition by \Cref{prop:comp-is-upper-computable}.
\end{proof}
This upper bound is tight for graphs with infinitely many ends:
\begin{prop}\label{example-graph-whose-sep-is-halting-complete}
    There is a connected highly computable graph $G$ with infinitely many ends such that $\sep{G}$ is $\Pi_1^0$-complete.   
\end{prop}
\begin{proof}  Let $(\varphi_e)_{e \in \N}$ be a computable enumeration of Turing machines without input, and let $G$ be the graph with vertex set $V(G) = \{ (e,s): \ \varphi_e[s] \uparrow \} \subset \N^2$ and  edge set $E(G) = \{((e,0),(e+1,0): \ e \in \N \} \cup \{((e,s-1),(e,s)): \ \varphi_e[s] \uparrow \}$. 

Intuitively, the graph $G$ looks like a tree in which every branch is associated to a Turing machine, and it is finite or infinite depending on whether the Turing machine halts or not. The following picture illustrates $G$ in the case where $\varphi_0$ and $\varphi_2$ halt but $\varphi_1$ doesn't.
    \begin{tcolorbox}
    \begin{center}
\begin{tikzpicture}
        \node at (0,0) (0) {$\bullet$};
        \node[below = 0 cm of 0] {$(0,0)$};
        \node at (1.5,0) (1) {$\bullet$};
        \node[below = 0 cm of 1] {$(1,0)$};
        \node at (3,0) (2) {$\bullet$};
        \node[below = 0 cm of 2] {$(2,0)$};
        \draw (0) -- (1);
        \draw (1) -- (2);
        \draw (2) -- (4,0);
        \draw[dashed] (4,0) -- (4.75,0);
        \node at (0,1.5) (0') {$\bullet$};
        \node[left = 0 cm of 0'] {$(1,0)$};
        \node at (1.5,1.5) (1') {$\bullet$};
        \node[left = 0 cm of 1'] {$(1,1)$};
        \node at (3,1.5) (2') {$\bullet$};
        \node[left = 0 cm of 2'] {$(2,1)$};
        \node at (0,3) (0'') {$\bullet$};
        \node[left = 0 cm of 0''] {$(2,0)$};
        \node at (1.5,3) (1'') {$\bullet$};
        \node[left = 0 cm of 1''] {$(2,1)$};
        \node at (1.5,4.5) (1''') {$\bullet$};
        \node[left = 0 cm of 1'''] {$(3,1)$};
        \draw (0) -- (0');
        \draw (0') -- (0'');
        \draw (1) -- (1');
        \draw (1') -- (1'');
        \draw (1'') -- (1''');
        \draw (1''') -- (1.5, 5.5);
        \draw[dashed] (1.5,5.5) -- (1.5, 6.25);
        \draw (2) -- (2');
    \end{tikzpicture}
    
    \end{center}
    \end{tcolorbox}
That $\sep{G}$ is $\Pi_1^0$-hard follows from the observation that $\{((e,s),(e,s+1)) : s \in \N\}$ belongs to $\sep{G}$ if and only if $\varphi_e$ does not halt. 
\end{proof}
We now prove that for a graph with finitely many ends, $\comp_G(\cdot)$, $\Ends{G}$ and $\sepmax{G}$ are computable. 
\begin{thm}\label{thm:magic_function}
    Let $G$ be a connected highly computable graph with finitely many ends. Then the map $E\mapsto \comp_G(E)$ is computable. 

    In fact, there is a partial computable function which on input a (program for a) connected highly computable graph $G$ with finitely many ends, a finite set $E \subset V(G)$, the number of ends of $G$ and an arbitrary element $W \in \sepmax{G}$, outputs $\comp_{G}(E)$. 
\end{thm}
\begin{proof}
    We prove the second part of the statement, which implies the first part of the statement. Assume we receive as inputs some description of the highly computable graph $G$, a finite set of edges $W \in \sepmax{G}$, the number $k$ of ends of $G$, and a finite set $E \subset E(G)$. 
    
    In order to compute $\comp_G(E)$ we first compute  a set $U\Subset E(G)$ with the following properties: (1) $U$ contains $E$ and $W$, (2) $G[U]$ is connected, (3) for each pair of vertices  $u,v$ in $G\smallsetminus U$ that are incident to edges from $U$, and such that $u,v$ lie in the same infinite connected component of $G\smallsetminus U$, there is a path from $u$ to $v$ that is completely contained in $G[U\smallsetminus E]$, and (4) $G\smallsetminus U$ has exactly $k$ infinite connected components, and no finite connected component.
    
    Computing a set with these properties requires some care, and we need to introduce some notation. Fix a vertex $v_0$, and for each $r\in\N$ denote by $U_r$ the edge set of the induced graph $G[\{v\in V(G)\mid d_G(v,v_0)\leq r\}]$. We let $L_r$ be the set of vertices in $G\smallsetminus U_r$ that are incident to some edge from $U_r$. Observe that we can compute $U_r$ and $L_r$ from $r$ and the other input parameters, as the graph $G$ is highly computable.    
    
    We  compute $r_0$ as the smallest natural number such that both $E$ and $W$ are contained in $U_{r_0}$.  Thus $U_{r_0}\in\sepmax{G}$ and $\comp_G(E_{r_0})=k$. Since  $k$ is part of the input, we can use \Cref{prop:comp-is-upper-computable} to  compute the smallest natural number $n_0$ such that $\comp_{G,n_0}(U_{r_0})=k$.

    It is clear that $U_{r_0+n_0}$ has properties (1) and (2) above. We prove that it also has property (3). Let $u,v\in L_{r_0+n_0}$ belong to the same infinite connected component of $G\smallsetminus U_{r_0+n_0}$. We remark that by definition of the sets $U_r$ and $L_r$, there is a path contained in $G[U_{r_0+n_0}\smallsetminus U_{r_0}]$ that joins $u$ to some element $u'\in L_{r_0}$. Similarly, there is a path contained in $G[U_{r_0+n_0}\smallsetminus U_{r_0}]$ that joins $v$ to some element $v'\in L_{r_0}$. Since $u$ and $v$ belong to the same infinite connected component of $G\smallsetminus U_{r_0}$, it follows that the same is true for the new elements $u',v'$. Finally, \Cref{comp-function-extra-obs} shows that $u'$ and $v'$ can be joined with a path contained in $G[U_{r_0+n_0}\smallsetminus U_{r_0}]$. Thus there is a path in $G[U_{r_0+n_0}\smallsetminus U_{r_0}]$ that connects $u$ and $v$. 
    
    It only remains to address property (4) above. For this purpose we let $U_f$ be the set of edges in $G$ that lie in some finite connected component of $G\smallsetminus U_{r_0+n_0}$. The set $U_f$ can be easily computed using \Cref{comp-function-extra-obs} (that is, we know that  $U_{r_0+n_0}$ belongs to $\sepmax{G}$ and thus $\comp_G(U_{r_0+n_0})=\Ends{G}$. Using \Cref{prop:comp-is-upper-computable} we can compute some $k_0$ such that $\comp_{G,k_0}(U_{r_0+n_0})=\comp_G(U_{r_0+n_0})$.  \Cref{comp-function-extra-obs} shows that finite  connected components  $G\smallsetminus U_{r_0+n_0}$ are the same as connected components of $G[U_{r_0+n_0+k_0}]$ having no vertex in $L_{r_0+n_0+k_0}$).

      The set $U=U_f\cup U_{r_0+n_0}$ the properties (1), (2), (3) and (4) defined before. We will now use this set to count the number of infinite connected components of $G\smallsetminus E$. Let $V$ be the set of all vertices in $G\smallsetminus U$ that are incident to some edge from $U$, and consider a partition  $V=V_1\sqcup \dots \sqcup V_k$ where $V_i$ is the set of elements in the $i$-th connected component of $G\smallsetminus U$. Our task is to determine for which $i\ne j$, $V_i$ and $V_j$ are in the same connected component of $G\smallsetminus E$.

    We claim that the infinite connected component of $G\smallsetminus E$ that contains $V_i$ equals the one that contains $V_j$ if and only if there is a path within $G[U]$ that visits no edge from $E$, and that joins some vertex in $V_i$ to some vertex in $V_j$. The backward implication is obvious. For the forward implication, suppose that $V_i$ and $V_j$ lie in the same connected component of $G\smallsetminus E$. Then there is a path $p$ in $G$ whose initial vertex is in $V_i$, whose final vertex is in $V_j$, and which visits no edge from $E$. 
    %Nota: la condicion (4) arriba, que $G\smallsetminus U$ no tiene componentes finitas, es para asegurar que el path sale de $U$, y entra a una componente \textit{infinita}.
    Some segments of $p$ may escape from the graph $G[U\smallsetminus E]$ through a vertex in $V$, and then enter again to $G[U\smallsetminus E]$ through another vertex in $V$. Condition (3) in the definition of $U$ ensures that we can replace these segments by segments completely contained in $G[U\smallsetminus E]$. After replacing all these segments, we end up with a path with the same initial and final vertex as $p$, but contained in $G[U\smallsetminus E]$. This shows the forward implication. 
    
    Define an equivalence relation $\sim$ in $\{1,\dots,k\}$ as follows: $i\sim j$ when there is a path in $G[U]$ that visits no edge from $E$, and that joins some vertex in $V_i$ to some vertex in $V_j$. This relation is computable because $G[U]$ is a finite graph, and we can compute the number of  equivalence classes of $\sim$. The previous paragraph shows that this number equals $\comp_G(E)$.
\end{proof}

\begin{cor}\label{cor:sep-is-decidable-on-graphs-with-finitely-many-ends}
    Let $G$ be a connected highly computable graph with finitely many ends. Then $\sep{G}$ and $\sepmax{G}$ are decidable sets. 
\end{cor}
\begin{proof}
    This follows from \Cref{thm:magic_function} and the facts  $E\in\sep{G}\iff \comp_G(E)\geq 2$,  $E\in\sepmax{G}\iff \comp_G(E)=\Ends{G}$.  
    %Let us denote by $S(G,E,n,W)=\comp_{G}(E)$ the computable function from  \Cref{thm:magic_function}, where $G$ is a highly computable graph with $n$ ends, $E$ is a finite set of edges and $W$ is some element in $\sepmax{G}$. When seen as a function of $E$, that is, when $G,n,W$ are fixed, this is a computable function that allows to decide both $\sep{G}$ and $\sepmax{G}$, as $E\in\sep{G}\iff\comp_{G}(E)\geq2$
%and $E\in\sepmax{G}\iff\comp_{G}(E)=n.$
\end{proof}

\subsection{The complexity of $\sep{G}$ and  $\sepmax{G}$ when $G$ is in the input}\label{sec:uniform-sep-ends-and-sepmax}
We have proved that $\sep{G}$ and $\sepmax{G}$ are decidable sets, provided that $G$ is a connected and highly computable graph with finitely many ends. But how difficult is determining membership in $\sep{G}$ and $\sepmax{G}$ when $G$ is also part of the input? The goal of this subsection is showing that these  problems are $\Pi_1^0$-complete and $\Pi_2^0$-complete, respectively. We also prove lower bounds for the problem of finding \textit{any} element in $\sep{G}$ or $\sepmax{G}$, on input $G$. 

The next construction will be used to prove lower bounds associated to $\sep{G}$. 
\begin{ex}\label{ex:grafo-con-colita}
    There is a uniformly highly computable sequence of graphs $(G_e)$, all of them trees with two ends and with vertex set $\Z$, with the following property. For each $e$, let $g_e$ be the edge in $G_e$ that joins $0$ to $1$. Then $\{e : \{g_e\}\in\sep{G_e}\}$ is $\Pi_1^0$-complete.
    
    In order to describe this family of graphs we fix a computable enumeration of Turing machines without input $(\varphi_e)_{e \in \N}$. For each $e$ we define $G_e$ as the graph with vertex set $V(G_e) = \Z$, and with edge relation $E(G_e)$ given by
    \[\{(s,s+1)\mid s\in \N\} \cup 
    \{(-s-1,s)\mid \varphi_{e}[s-1] \uparrow \text{ and } \varphi_{e}[s] \downarrow  \}\cup\]\[
     \{(-s,-s-1)\mid (\varphi_{e}[s-1]\uparrow \text{ and } \varphi_{e}[s] \uparrow) \text{ or } (\varphi_{e}[s-1]\downarrow \text{ and } \varphi_{e}[s] \downarrow)  \}\]
Thus for each $e$ the edges of $G_e$ incident to the vertices $\{-s,\dots,s\}$ can be determined by running $\varphi_e$ for $s$ steps.  For each $s$ such that $\varphi_{e}[s]\uparrow$,  we include edges joining $s$ to $s+1$, and $-s$ to $-s-1$. It follows that if $e$ is such that $\varphi_e\uparrow$, then the graph $G_e$ is simply the graph with vertex set $\Z$, and where consecutive integers are joined. 

The key part of the construction occurs when $\varphi_{e}$ halts in exactly $s$ steps. If $s$ has this property, then we join $s$ to $s+1$, we \textit{do not} join $-s$ to $-s-1$, and we include an edge joining $-s-1$ and $s$. If $s$ is such that $\varphi_{e}$ halts in less than $s$ steps, then we have edges joining $s$ to $s+1$, and $-s$ to $-s-1$ (as in the previous paragraph). Thus  if $\varphi_e$ halts in $s$ steps, then the graph $G_e$ appears to be a bi-infinite line when we look at the segment $\{-s,\dots,s\}$. However, two infinite lines are born from the vertex $s$, while $-s$ is a ``dead end''. This is illustrated in the next picture. 
  \begin{tcolorbox}
        \begin{center}
             \begin{tikzpicture}
                \node at (0,0) (s) {$\bullet$};
                \node[below = 0 cm of s] {$s$};
                \draw (-2,0) -- (s);
                \draw (2,0) -- (s);
                \draw[dashed] (-2.1,0) -- (-2.6,0);
                \draw[dashed] (2.1,0) -- (2.6,0);
                \node at (0,1) (0) {$\bullet$};
                \node[left = 0 cm of 0] {$0$};
                \node at (0,2) (-s) {$\bullet$};
                \node[left = 0 cm of -s] {$-s$};
                \draw (s) -- (0) -- (-s);
            \end{tikzpicture}
        \end{center}
    \end{tcolorbox}
It follows from the construction that  the edge  $g_e$ in $G_e$ joining $0$ to $1$ satisfies $\{g_e\}\in\sep{G_e}$ if and only if $\varphi_e\uparrow$.
\end{ex} 
\begin{prop}\label{complexity-of-sep-uniform-version}
The problem of given a highly computable and connected graph $G$ and $E\Subset E(G)$, to determine whether $E\in \Sep{G}$, is $\Pi_1^0$-complete.
\end{prop}
\begin{proof}
    The proof of \Cref{prop:decide-only-one-inf-comp} is uniform and this provides the upper bound. The lower bound follows from \Cref{ex:grafo-con-colita}. 
\end{proof}
The same construction shows that any function which on input (a computer program for) $G$ finds a \textit{witness of separation} must compute the halting problem. 
\begin{prop}\label{computing-one-element-in-sep-is-Turing-complete}
Let $f$ be a function such that on input (a computer program for) the connected and highly computable graph $G$, $f$ outputs an element in $\sep{G}$. Then $f\geq_T\HP$.
\end{prop}
\begin{proof}
    Let $(G_e)_{e\in\N}$ and  $(\varphi_e)_{e\in\N}$ be as in \Cref{ex:grafo-con-colita}. For each $e$, we use $f$ to compute an element $E_e$ in $\sep{G_e}$ and we let $s_e\in\N$ be the maximum value of $|v|$, where $v\in \Z$ ranges over vertices incident to edges in $E_e$. Observe that $f\geq_T (s_e)_{e\in\N}$. Furthermore the following holds for each $e$: if $\varphi_e$ halts, then this happens in at most $s_e$ steps.  Thus $(s_e)_{e\in\N}\geq_T\{e\in\N : \varphi_e\downarrow\}$ and therefore $f\geq_T\HP$.
\end{proof}
In the case of $\sepmax{G}$ our lower bounds will be obtained from the next example. 
\begin{ex}\label{grafo-con-ciclos}
There is a uniformly highly computable family of graphs $(G_e)_{e\in\N}$, all of them with either $1$ or $2$ ends, such that $\{e\in\N : \Ends{G_e}=1\}$ is $\Pi_2^0$-complete. 

In order to describe this family of graphs we fix a computable enumeration of all c.e. sets $(W_e)_{e \in \N}$. We recall that  $\textbf{Inf}=\{e\in\N:|W_e|=\infty\}$ is $\Pi_2^0$-complete. For every $e$ the graph $G_e$ has vertex set $\Z$, and its edge set is constructed according to the following effective procedure. We start enumerating the set $W_e$, and as long as $W_{e}[s] = W_{e}[s-1]$, we put edges between consecutive integers in $\{-s-1,\dots,s+1\}$. Thus the graph looks like a line: \begin{tcolorbox}
        \begin{center}
            \begin{tikzpicture}
                \node at (-1,0) (-1) {$\bullet$};
                \node at (0,0) (0) {$\bullet$};
                \node at (1,0) (1) {$\bullet$};
                \node at (-4,0) (-s) {$\bullet$};
                \node at (4,0) (s) {$\bullet$};
                \node at (-5,0) (-s-1) {$\bullet$};
                \node at (5,0) (s+1) {$\bullet$};
                \node[above =0 cm of -1] {$-1$};
                \node[above =0 cm of 0] {$0$};
                \node[above =0 cm of 1] {$1$};
                \node[above =0 cm of -s] {$-s$};
                \node[above =0 cm of s]{$s$};
                \node[above =0 cm of -s-1] {$-s-1$};
                \node[above =0 cm of s+1]  {$s+1$};
                \draw (-1) -- (0);
                \draw (0) -- (1);
                \draw (-1) -- (-2,0);
                \draw[dashed] (-2.1,0) -- (-2.9,0);
                \draw (-3,0) -- (-s);
                \draw (1) -- (2,0);
                \draw[dashed] (2.1,0) -- (2.9,0);
                \draw (3,0) -- (s);
                \draw[blue, thick] (-s) -- (-s-1);
                \draw[blue, thick] (s) -- (s+1);
                \draw[dashed] (-s-1) -- (-6,0);
                \draw[dashed] (s+1) -- (6,0);
            \end{tikzpicture}
        \end{center}
    \end{tcolorbox}
    As soon as we see that $W_{e}[s] \ne W_{e}[s-1]$ (that is, a new element enters $W_e$ at stage $s$), we omit the edge between $-s$ and $-s-1$ and, instead, connect $s$ to $-s$ and also to both $s+1$ and $-s-1$. In this manner we create a cycle with vertex set $\{-s,\dots,s\}$.

    \begin{tcolorbox}
        \begin{center}
            \begin{tikzpicture}
                \node at (-1,0) (-s-1) {$\bullet$};
                \node at (0,0) (s) {$\bullet$};
                \node at (1,0) (s+1) {$\bullet$};
                \node[below=0 cm of -s-1] {$-s-1$};
                \node[below =0 cm of s] {$s$};
                \node[below=0 cm of s+1] {$s+1$};
                \node at (.75,0.5) (s-1) {$\bullet$};
                \node at (-.75,0.5) (-s) {$\bullet$};
                \node[right =0 cm of s-1] {$s-1$};
                \node[left =0 cm of -s] {$-s$};
                \draw[blue, thick] (s) -- (s+1);
                \draw[blue, thick] (s) -- (-s-1);
                \draw[blue, thick] (s) -- (-s);
                \draw (s) -- (s-1);
                \node at (.75, 1.25) (1) {$\bullet$};
                \node at (0, 1.75) (0) {$\bullet$};
                \node at (-.75, 1.25) (-1) {$\bullet$};
                \node[left = 0 cm of -1] {$-1$};
                \node[above = 0cm of 0] {$0$};
                \node[right = 0cm of 1] {$1$};
                \draw (0) -- (-1);
                \draw (0) -- (1);
                \draw[thick, dashed] (-s) -- (-1);
                \draw[thick, dashed] (s-1) -- (1);
                \draw[dashed] (-s-1) -- (-2,0);
                \draw[dashed] (s+1) -- (2,0);
            \end{tikzpicture}
        \end{center}
    \end{tcolorbox}
    
This process is iterated, and more cycles will be created whenever new elements enter $W_e$. According to this procedure, exactly one cycle is created for each element $W_e$. 
\begin{tcolorbox}            
        \begin{center}               
            \begin{tikzpicture}
                \node at (0,0) (0) {$\bullet$};
                \node at (1,0) (s0) {$\bullet$};
                \node at (2,0) (sk) {$\bullet$};
                \node at (3,0) (s*) {$\bullet$};
                \draw (0) .. controls (0.25, 0.5) and (0.75, 0.5) .. (s0);
                \draw (0) .. controls (0.25, -0.5) and (0.75, -0.5) .. (s0);
                \draw[dashed] (s0) -- (sk);
                \draw (sk) .. controls (2.25, 0.5) and (2.75, 0.5) .. (s*);
                \draw (sk) .. controls (2.25, -0.5) and (2.75, -0.5) .. (s*);
                \draw (s*) -- ++(20:0.75cm);
                \draw[dashed] (s*)++(20:0.75cm) -- ++(20:0.75cm);
                \draw (s*) -- ++(-20:0.75cm);
                \draw[dashed] (s*)++(-20:0.75cm) -- ++(-20:0.75cm);
                \node[below = 0 cm of 0] {$0$};
                \node[below = 0 cm of s0] {$s_0$};
                \node[below = 0 cm of sk] {$s_k$};
                \node[below = 0 cm of s*] {$s^*$};
                \coordinate (U) at ($(s*)+(20:1.5cm)$);
                \coordinate (B) at ($(s*)+(-20:1.5cm)$);
                \node (F) at ($0.5*(U)+0.5*(B)$) {};
                \node (L) at ($0.5*(0)+0.5*(F)$) {};
                \node[below = 0.75cm of L] {$G_e$ for $e \in \textbf{Fin}$};
            \end{tikzpicture}
            \qquad \qquad
            \begin{tikzpicture}
                \node at (0,0) (0) {$\bullet$};
                \node at (1,0) (s0) {$\bullet$};
                \node at (2,0) (s1) {$\bullet$};
                \node at (3,0) (s2) {$\bullet$};
                \draw (0) .. controls (0.25, 0.5) and (0.75, 0.5) .. (s0);
                \draw (0) .. controls (0.25, -0.5) and (0.75, -0.5) .. (s0);
                \draw (s0) .. controls (1.25, 0.5) and (1.75, 0.5) .. (s1);
                \draw (s0) .. controls (1.25, -0.5) and (1.75, -0.5) .. (s1);
                \draw (s1) .. controls (2.25, 0.5) and (2.75, 0.5) .. (s2);
                \draw (s1) .. controls (2.25, -0.5) and (2.75, -0.5) .. (s2);
                \draw[dashed] (s2) -- (4,0);
                \node[below = 0 cm of 0] {$0$};
                \node[below = 0 cm of s0] {$s_0$};
                \node[below = 0 cm of s1] {$s_1$};
                \node[below = 0 cm of s2] {$s_2$};
                \node[below = 0.65 cm of s1] {$G_e$ for $e \in \textbf{Inf}$}; 
            \end{tikzpicture}
        \end{center}
    \end{tcolorbox}
It is clear that $G_e$ has one end if and only if $W_e$ is infinite.
\end{ex}

\begin{prop}\label{Determining-membership-in-sepmax-uniformly-is-Pi2}The problem of given a highly computable and connected graph $G$ and $E\Subset E(G)$, to determine whether $E\in \sepmax{G}$, is $\Pi_2^0$-complete. 
\end{prop}
\begin{proof}
We have that $E\Subset E(G)$ lies in $\sepmax{G}$ if and only if $(\forall F\Subset E(G),\forall n\in\N)(\exists m\geq n)(\comp_{G,m}(F)\leq \comp_{G,m}(E))$, which is a $\Pi_2^0$ condition by \Cref{prop:comp-is-upper-computable}.  For the lower bound we consider a minor modification of the sequence $G_e$ defined in  \Cref{grafo-con-ciclos}. For each $e$, let $N_e$ be the graph whose vertex set is a copy of the naturals $\{n' : n\in\N\}$, and where $n'$ is joined to $m'$ if and only if $n,m$ are consecutive integers. For each $e$ we let $H_e$ be the graph obtained by taking the union of $N_e$ and $G_e$, and including one edge $g_e$ joining the vertex $0$ of $G_e$ to the vertex $0'$ of $N_e$. Then $\{g_e\}\in\sepmax{H_e}$ if and only $G_e$ has one end, and thus $\{e : \{g_e\}\in\sepmax{H_e}\}$ is a $\Pi_2^0$-complete set. 
\end{proof}
\begin{prop}\label{prop:lower-bound-sepmax}
Let $f$ be any function such that on input (a computer program for) the connected and highly computable graph $G$, $f$ outputs an element in $\sepmax{G}$. Then $f\geq_T\HP'$.
\end{prop}
\begin{proof}    
    We consider the family of graphs $(G_e)_{e\in\N}$  from \Cref{grafo-con-ciclos}. Since the set $\{e : \Ends{G_e}=1\}$ is $\Pi_2^0$-complete (see \Cref{grafo-con-ciclos}), we also have $\{e : \Ends{G_e}=1\}\geq_T\HP'$, so it suffices to describe a process which having oracle access to $f$ determines the number of ends of $G_e$. 
    
    On input $e$ we use $f$ to compute an element $E_e$ in $\sepmax{G_e}$. Note that if $\Ends{G_e}=2$ then we have $E_e\in\sep{G_e}$ (as in this case $\sep{G}=\sepmax{G})$, while if  $\Ends{G_e}=1$ then  $E_e\not\in\sep{G_e}$ (being the set $\sep{G}$ empty if $G$ is a graph with one end). It follows that $\Ends{G_e}=1$ if and only if $E_e\not\in\sep{G_e}$, which can be determined with oracle $\HP$  (\Cref{complexity-of-sep-uniform-version}). Therefore if we prove that $f\geq_T\HP$, then we will also have  $f\geq_T\HP'$.

    It suffices to observe that the lower bound in \Cref{computing-one-element-in-sep-is-Turing-complete}  was proved with a family of graphs with two ends. Since $\sep{\cdot}$ and $\sepmax{\cdot}$ coincide in this case, it follows that $f\geq_T\HP$ as desired.
\end{proof}
\Cref{grafo-con-ciclos} provides an alternative proof of the following known result (see \cite{Kuske-Lohrey}).
\begin{prop}
    Let $k\geq 1$. Then the problem of determining whether a highly computable and connected graph has at most $k$ ends is $\Pi^0_2$-complete.
\end{prop}
\begin{proof}
    Let $G$ be a graph as in the statement. Then $G$ has at most $k$ ends if and only if $(\forall E\Subset E(G))(\exists n\in\N)(\comp_{G,n}(E)\leq k)$, which is a $\Pi_2^0$ condition by \Cref{prop:comp-is-upper-computable}. The lower bound in the case $k=1$ is proved in \Cref{grafo-con-ciclos}. The case $k> 1$ is proved by a trivial modification of the same family. That is, it suffices to attach to each $G_e$ $k-1$ extra infinite lines. 
\end{proof}

\subsection{The relationship between $\sep{G}$, $\sepmax{G}$ and  $\Ends{G}$}
We have shown that the problem of determining membership in $\sepmax{G}$, with an algorithm uniform on $G$, is a $\Pi_2^0$-complete problem. Since this coincides with the complexity of bounding the number of ends of $G$ from above, the following questions arise naturally:
\begin{quote}
    Suppose we are given a highly computable and connected graph $G$ with finitely many ends, and we are given oracle access to $\sepmax{G}$. Is this information sufficient to determine  $\Ends{G}$? What about the converse?
\end{quote}
In this section we study these and related questions. Our main result states that the knowledge of $\sepmax{G}$ is in a sense equivalent to knowledge of both $\sep{G}$ and $\Ends{G}$:
\begin{thm}\label{ends+sep=sepmax}
    Let $G$ be a highly computable graph with finitely many ends. Then:
    \begin{enumerate}
        \item There is an algorithm, uniform in $G$, which with oracle access to $\sepmax{G}$ computes $\Ends{G}$ and determines membership in $\sep{G}$.
        \item There is an algorithm, uniform in $G$, which given $\Ends{G}$ and with oracle access to $\sep{G}$, decides membership in $\sepmax{G}$. 
    \end{enumerate}
\end{thm}

Our main tool will be the following elementary result.
\begin{lem}\label{minimal-for-inclusion-sets}
    Let $G$ be an infinite, connected, and locally finite graph with finitely many ends, and let $w$ be a vertex in $G$. For each $r\in\N$ let $V_r$ be the set of vertices in $G$ at distance $r$ to $w_0$. Furthermore, we let $E_r$ be the set of edges joining some vertex in $V_r$ to some vertex in $V_{r-1}$.
    \begin{enumerate}
    \item Suppose that $E_r\in\sepmax{G}$. Then $E\subset E_r$ is minimal for inclusion within $\sepmax{G}$ if and only if either $\Ends{G}=1$ and $E=\emptyset$, or $\Ends{G}\geq2$ and there is an  infinite connected component $C$  of $G\smallsetminus E$ such that $E$ is equal to all edges in $E_r$ incident to a vertex in some infinite connected component of $G\smallsetminus E$ different from $C$.   
    
    \item Suppose that  $E_r\in\sep{G}$. Then $E\subset E_r$ is  minimal for inclusion within $\sep{G}$ if and only if there is an infinite connected component $C$ of $G\smallsetminus E_r$, such that $E$ equals the set of all edges in $E_r$ incident to some vertex from $C$. 
    \item $E_r$ belongs to $\sepmax{G}$ if and only if there are exactly $k$ disjoint subsets of $E_r$ that are minimal for inclusion within $\sep{G}$.
    \end{enumerate}
\end{lem}
\begin{proof}
Observe that the three claims hold for trivial reasons if $\Ends{G}=1$. Suppose in what follows that $\Ends{G}>1$. Let $r\geq 1$, and let us introduce some useful notation regarding the connected components of $G\smallsetminus E_r$. Let  $C_1,\dots,C_k$ be the infinite connected components of $G\smallsetminus E_r$, $k\geq 1$. For each $i=1,\dots,k$ we let $E_r^i$ be the set of edges in $E_r$ that are incident to some vertex from $C_i$.  

Let us suppose $E_r\in\sepmax{G}$ and prove that (1) holds. Let $E\subset E_r$ be minimal for inclusion within $\sepmax{G}$. It is clear that then $E$ contains no edge incident to a finite connected component of $G\smallsetminus E_r$. Furthermore,  there can not be two different components $C_i$ and $C_j$, $i,j\in\{1,\dots,k\}$, such that $E$ fails to contain both $E_r^i$ and $E_r^j$. Indeed, in this situation the graph $G\smallsetminus E$ would have a single component containing both $C_i$ and $C_j$, therefore a component with two ends, contradicting the assumption that $E\in\sepmax{G}$. It follows that $E$ must contain at least $k-1$ of the sets $E_r^i$, $i=1,\dots,k$. Let $i_0\in\{1,\dots,k\}$ be such that $E$ contains $E'=\bigcup_{1\leq i\leq k,\ i\ne i_0}E_r^i$. It is straightforward that $E'\in\sepmax{G}$. Since $E$ was assumed to be minimal for inclusion within $\sepmax{G}$, it follows that $E=E'$ and the claim is verified. The backward implication in (1) follows the same arguments.  

Let us suppose $E_r\in\sep{G}$ and prove that (2) holds. Let $E\subset E_r$ be minimal for inclusion within $\sep{G}$. Then there must be some $i\in\{1,\dots,k\}$ such that $E$ contains $E_r^i$, for otherwise $G\smallsetminus E$ would have exactly one infinite connected component and this is not possible for $E_r\in\sep{G}$. As in the previous paragraph, minimality implies that in fact $E=E_r^i$, and the claim is verified. The backward implication for (2) follows similar arguments. Finally, observe that (3) follows directly from (2).\end{proof}

\begin{proof}[Proof of \Cref{ends+sep=sepmax}]
We start with the first claim in the statement. Suppose we are given as input a (computer program for a) highly computable and connected graph $G$ with finitely many ends, and oracle access to $\sepmax{G}$. We start showing that this is sufficient to compute $\Ends{G}$. For each $r\geq 1$ we let $E_r$ be as in \Cref{minimal-for-inclusion-sets}. Having oracle access to $\sepmax{G}$ we can compute the smallest natural number $r_0$ such that $E_{r_0}\in\sepmax{G}$. Next we compute the number of finite subsets $E\subset E_{r_0}$ that also belong to $\sepmax{G}$ and that are minimal for inclusion in $\sepmax{G}$. This is equal to $\Ends{G}$ by \Cref{minimal-for-inclusion-sets} (this is also true if $\Ends{G}=1$, in which case $r_0=1$ and $\emptyset$ is the only minimal for inclusion subset of $E_r$ in $\sepmax{G}$). Thus we have computed $\Ends{G}$. Having the information of $E_{r_0}$ and $\Ends{G}$, we can decide membership in $\sep{G}$ with the function from \Cref{thm:magic_function}.

We now prove the second claim in the statement. Suppose we are given as input a (computer program for a) highly computable and connected graph $G$ with finitely many ends, the value $k=\Ends{G}$, and oracle access to $\sep{G}$.  For each $r\geq 1$ we let $E_r$ be as in \Cref{minimal-for-inclusion-sets}. We let $r=r_0$ be the smallest natural number such that $E_r$ has exactly $k$ disjoint subsets that are minimal for inclusion within $\sep{G}$. The value $r_0$ must exist by the third item in \Cref{minimal-for-inclusion-sets} and because $E_r\in\sepmax{G}$ for all $r$ large enough. Furthermore, we can compute $r_0$ having oracle acccess to $\sep{G}$. Having the information of $E_{r_0}\in\sepmax{G}$ and $k=\Ends{G}$, we can decide membership in $\sepmax{G}$ with the algorithm in \Cref{thm:magic_function}.  
\end{proof}

We now provide some examples showing that  \Cref{ends+sep=sepmax} cannot be improved significantly. The following example shows that  $\Ends{G}$ is not enough to compute $\sep{G}$, even if the graphs are assumed to be trees. 
\begin{prop}
    There is a uniformly highly computable sequence of trees $(G_e)_{e\in\N}$ such that $\Ends{G}$ is uniformly computable, but $\sep{G}$ is not. 
 \end{prop}
\begin{proof}
    This is proved by \Cref{ex:grafo-con-colita}.
\end{proof}
Similarly, we observe that having access to $\sep{G}$ is not enough to compute $\Ends{G}$, even if the graphs are assumed to be trees. 
\begin{prop}
    There is a uniformly highly computable sequence of trees $(G_e)_{e\in\N}$ such that $\sep{G}$ is uniformly computable, but $\Ends{G}$ is not.
\end{prop}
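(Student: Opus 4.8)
The plan is to build a kind of \vir{mirror image} of the construction in \Cref{prop-lines-with-sticks}: a uniformly highly computable sequence of trees $(G_e)_{e\in\N}$ in which each graph has either one or two ends, with two ends occurring exactly when $\varphi_e$ halts, while keeping separation uniformly decidable. Fix a computable enumeration $(\varphi_e)_{e\in\N}$ of Turing machines without input, and write $\varphi_{e,s}\downarrow$ for \vir{$\varphi_e$ halts within $s$ steps}. I would set $V(G_e)=\Z$ and include the following edges: the \emph{positive ray} edges $(s,s+1)$ for every $s\geq 0$ (always present); a \emph{pendant} edge $(-s,s)$ for each $s\geq 1$ with $\varphi_{e,s-1}\uparrow$; and a \emph{negative ray} edge $(-s,-s-1)$ for each $s\geq 0$ with $\varphi_{e,s}\downarrow$. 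The presence of each edge is decided by a computation whose length is bounded by the indices of its endpoints, so $(G_e)_{e\in\N}$ is uniformly highly computable, and a direct inspection shows that each $G_e$ is a connected tree.

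The first thing to verify is the intended dichotomy. If $\varphi_e$ never halts, no negative ray edge is ever added and $G_e$ is the positive ray with a single pendant vertex $-s$ hanging from each $s\geq 1$; this graph has one end, and in fact $\sep{G_e}=\emptyset$. If $\varphi_e$ halts, let $s_0$ be the least halting stage: then $-1,\dots,-(s_0-1)$ are pendants, while $-s_0,-s_0-1,\dots$ form an infinite ray attached to the positive ray at $s_0$ through the pendant edge $(-s_0,s_0)$. This graph has exactly two ends, joined by the bi-infinite \vir{double ray} $\dots,-s_0-1,-s_0,s_0,s_0+1,\dots$. Consequently $\Ends{G_e}=2$ if and only if $\varphi_e$ halts, so a uniform algorithm computing $e\mapsto\Ends{G_e}$ would decide $\HP$; this establishes that $\Ends{G}$ is not uniformly computable.

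The heart of the argument, and the step I expect to require the most care, is to show that $\sep{G_e}$ is \emph{uniformly decidable} even though the number of ends is undecidable. The key structural fact is that separation can only happen \vir{late}: if $E$ is a finite set of edges all of whose endpoints have absolute value at most $N$ and $\varphi_{e,N}\uparrow$, then $s_0>N$, the portion of $G_e$ met by $E$ is a finite comb whose only route to infinity runs through the still-intact junction at $s_0$, and hence $G_e\smallsetminus E$ has a single infinite component, i.e. $E\notin\sep{G_e}$. This suggests the procedure $A(e,E)$: compute $N=\max\{|a|,|b|:(a,b)\in E\}$, run $\varphi_e$ for $N$ steps, and if it has not halted output \vir{$E\notin\sep{G_e}$}; if it has halted, then $s_0\leq N$ is known, the whole relevant structure of $G_e$ is explicitly determined, and one computes $\comp_{G_e}(E)$ outright from this finite description (alternatively invoking \Cref{cor:sep-is-decidable-on-graphs-with-finitely-many-ends}, now applicable since $G_e$ is an explicit two-ended graph). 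Since $A$ only ever runs $\varphi_e$ for a bounded number of steps it is total and uniform in $e$, giving uniform computability of $\sep{G_e}$. The main obstacle is therefore not the construction but the rigorous verification of the \vir{late separation} claim, namely that no finite edge set supported on small indices can peel off a second infinite component before the junction at $s_0$ has appeared; this reduces to analysing the finite comb described above and checking that each piece cut off is finite while the outermost surviving tail remains the unique infinite component.
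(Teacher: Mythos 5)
Your proposal is correct, but it takes a genuinely different route from the paper. The paper's construction makes the separation side \emph{trivial}: each $G_e$ is a tree with no finite branches (three infinite rays meeting at $0$, with a fourth ray grafted on if and when $\varphi_e$ halts), so that removing any single edge already leaves two infinite components; consequently $\sep{G_e}$ is the same set for every $e$ — all nonempty finite edge sets — and uniform decidability requires no computation on $\varphi_e$ whatsoever, while the end count jumps when the extra ray appears. Your combs instead have $\sep{G_e}$ genuinely depending on $e$ (it is empty when $\varphi_e$ diverges, nontrivial when it halts), and you recover uniform decidability through the bounded-run trick: the scale $N$ of the query $E$ bounds how long $\varphi_e$ must be simulated, since separation can only occur at indices beyond the junction $-s_0,s_0$. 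Your verification of the ``late separation'' claim is sound — when $\varphi_{e,N}\uparrow$, every edge of $E$ lies in the comb $G_e[\{v\mid |v|\le N\}]$, all pieces cut off are finite, and both potential rays (the positive tail and, if it ever appears, the negative ray attached at $s_0>N$) remain in the single infinite component. What each approach buys: the paper's is shorter and needs no case analysis; yours produces the extremal situation of one versus two ends, exhibits a family where membership in $\sep{G_e}$ is decidable even though $\sep{G_e}=\emptyset$ is itself equivalent to $e\in\overline{\HP}$, and reuses the delayed-information technique that runs through the paper's other constructions (e.g.\ \Cref{prop-lines-with-sticks}). One small repair: in your halting case you should not appeal to \Cref{cor:sep-is-decidable-on-graphs-with-finitely-many-ends}, which is non-uniform; either compute $\comp_{G_e}(E)$ directly from the now-explicit structure (in a tree, $E$ separates iff it meets the double ray through $s_0$), or invoke \Cref{magic_function} uniformly with the number of ends $2$ and the explicit witness $\{(s_0,s_0+1)\}\in\sepmax{G_e}$.
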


\begin{proof} Fix a computable enumeration $(\varphi_e)_{e \in \N}$ of Turing machines without input. Let $V(G_e) = \Z \cup \{(s,1) \mid \varphi_{e}[s-1] \downarrow \}$. Two vertices in $V(G_e)$ are joined by an edge if they are horizontally adjacent (according to $\Z^2$), and we also include an edge joining $(s,0)$ to $(s,1)$ if $s$ is such that $\varphi_{e}[s]\downarrow$ and  $\varphi_{e}[s-1]\uparrow$. It is clear that $\sep{G_e}$ is uniformly decidable since  for each $e$, $G_e$ is a tree without finite branches and therefore $\sep{G_e}$ contains all finite sets of edges. On the other hand, an ``additional end'' will appear exactly when $\varphi_e$ halts, and therefore $\Ends{G_e}$ is not computable on input $e$. 
\end{proof}
\section{Computing infinite paths by separation}\label{sec:infinite-paths}
%The word path receives different meanings in graph theory, according to whether one admits repeated vertices or not. We say that a finite or infinite path is  \textit{simple} when every vertex is visited at most once.
%A fundamental result in graph theory, known as König's Infinity Lemma, asserts that a connected and locally finite graph admits an infinite simple path if and only if it is infinite. It is well known that this result is non effective in general, in the sense that there are connected highly computable graphs that are infinite, and where all infinite simple paths are uncomputable. See  \cite{Bean-paths} and \cite{Jockush}.

Here we establish a close connection between the separation problem, and the problem of determining whether a finite simple path in a graph can be extended to an infinite one. We  denote by $\Path{G}$ the set of finite sequences  $(v_i)_{i=0}^{n}$ of vertices in $V(G)$ with the property that $(v_i)_{i=0}^{n}$ can be extended to an infinite simple path $(v_n)_{n\in\N}$.

%\begin{defn}
%Let $G$ be a graph. We denote by $\Path{G}$ the set of finite
%words $v_{0}\dots v_{n}\in V(G)^{\ast}$ such that $v_{0},\dots,v_{n}$
%is a finite path on $G$ that can be extended to an infinite path. 
%\end{defn}

\begin{thm}
\label{thm:sep-vs-path}Let $G$ be a highly computable and connected infinite 
graph. Then $\sep{G}\geq_{T}\Path{G}$. If in addition $G$ is a tree, then
we have an equivalence $\sep{G}\equiv_{T}\Path{G}$. The reductions
are uniform on (a computer program for) $G$. 
\end{thm}
Putting this result together with \Cref{cor:sep-is-decidable-on-graphs-with-finitely-many-ends} we obtain the following amusing consequence. 

\begin{thm}\label{paths-is-decidable-for-finitely-many-ends}
     If $G$ is a highly computable infinite graph with finitely many ends, then $\Path{G}$ is a decidable set. In particular,  $G$ admits computable infinite simple paths. 
\end{thm}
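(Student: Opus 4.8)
The plan is to chain together the two principal results of this section. First I would invoke \Cref{cor:sep-is-decidable-on-graphs-with-finitely-many-ends}: since $G$ is a connected, highly computable graph with finitely many ends, the set $\sep{G}$ is decidable. Then I would appeal to \Cref{thm:sep-vs-path}, which asserts $\sep{G}\geq_T\Path{G}$, that is, $\Path{G}\leq_T\sep{G}$. Since any set Turing-reducible to a decidable set is itself decidable, it follows at once that $\Path{G}$ is decidable. This disposes of the main assertion with essentially no extra work.

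For the \emph{in particular} clause I would construct a computable infinite simple path by a greedy search that uses the (now genuinely computable) decision procedure for $\Path{G}$. First I locate a starting vertex: by König's Infinity Lemma the infinite, connected, locally finite graph $G$ admits some infinite simple path, so there is at least one $v_0$ with $v_0\in\Path{G}$, and I find such a $v_0$ by enumerating $V(G)$ and testing membership in the decidable set $\Path{G}$. Then, inductively, given a finite simple path $v_0\dots v_n\in\Path{G}$, the very definition of $\Path{G}$ guarantees that this path extends to an infinite simple one, so there is a neighbor $v_{n+1}$ of $v_n$ with $v_0\dots v_n v_{n+1}\in\Path{G}$. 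Because $G$ is highly computable I can list the finitely many neighbors of $v_n$ and test each candidate extension for membership in $\Path{G}$, selecting one that succeeds. The sequence $(v_n)_{n\in\N}$ produced this way is a computable infinite simple path.

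I do not expect a genuine obstacle here: the content lies entirely in correctly combining the cited results. The only point deserving a moment's care is verifying that the greedy construction never gets stuck, and this is precisely the extension property encoded in the definition of $\Path{G}$ — membership of the current finite path certifies the existence of a suitable next vertex, while high computability makes the search over the finitely many neighbors effective.
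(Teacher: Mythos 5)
Your proposal is correct and follows exactly the paper's route: the theorem is obtained by chaining \Cref{thm:sep-vs-path} (giving $\Path{G}\leq_T\sep{G}$) with \Cref{cor:sep-is-decidable-on-graphs-with-finitely-many-ends} (decidability of $\sep{G}$ for graphs with finitely many ends), precisely as you do. Your greedy construction of a computable infinite simple path merely makes explicit the standard argument the paper leaves implicit in its \emph{in particular} clause, and it is sound: membership of the current finite path in $\Path{G}$ certifies a valid one-vertex extension, which high computability lets you find by testing the finitely many neighbors.
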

Thus for a highly computable and connected graph $G$, the property ``having infinite simple paths but no computable one'' implies that $G$ has infinitely many ends, and that $\sep{G}$ is undecidable. The proof of \Cref{thm:sep-vs-path} will be based on the following result. 

\begin{lem}\label{sep-y-vecinos}
    Let $G$ be a highly computable connected graph. Let $P$ be the set of pairs $(E,v)$ such that $E\subset E(G)$ is finite, $v$ is a vertex in $G\smallsetminus E$ incident to some edge from $E$, and the connected component of $G\smallsetminus E$ containing $v$ is infinite. Then $P\leq_T\sep{G}$. This Turing reduction is uniform on (a computer program for) $G$.
\end{lem}
\begin{proof}
Let $E\subset E(G)$ be finite, let $v$ be a vertex in $G\smallsetminus E$ incident to some edge from $E$, and let $C_v$ be the connected component of $G\smallsetminus E$ containing $v$. It suffices to describe an algorithm which, having oracle access to $\sep{G}$, determines whether $C_v$ is finite or infinite. 

We start by checking whether $E\in \sep{G}$. If $E\not\in\sep{G}$ then  $\comp_{G}(E)=1$. We can use \Cref{prop:comp-is-upper-computable} to compute $n\in\N$ such that $\comp_{G,n}(E)=\comp_{G}(E)$. Then \Cref{comp-function-extra-obs} provides a criterion to determine whether $C_v$ is infinite.

Suppose now that $E\in\sep{G}$. For each $r\in\N$ we define $E_{v,r}$ as the set of edges in $E(G)$ that belong to a path in $G\smallsetminus E$ which visits $v$ and has length at most $r$. An elementary argument shows that $C_v$ is infinite if and only if for some $r\in \N$ we have $E_{v,r}\in\sep{G}$. Thus ``$C_v$ is infinite'' is a $\Sigma_1^0$ property with oracle $\sep{G}$. On the other hand, it is clear that ``$C_v$ is finite'' is a $\Sigma_1^0$ condition. This provides an algorithm which, having $\sep{G}$ as oracle, determines whether $C_v$ is infinite.
\end{proof}

\begin{proof}[Proof of \Cref{thm:sep-vs-path}]
Let $(v_i)_{i=0}^{n}$ be a finite sequence of vertices in $G$. Since $G$ is highly computable we can check whether it corresponds to a finite simple path, that is, whether each vertex is adjacent to the next one and whether no vertices are repeated. Let $E$ be the set of all edges in $G$ that are incident to $v_i$ for some $i=1,\dots,n$, and let $U$ be the set of vertices in $G\smallsetminus E$ that are adjacent to $v_n$. By König's Infinity Lemma, $(v_i)_{i=0}^{i=n}$ can be extended to an infinite simple path in $G$ if and only if there is $u\in U$ such that $C_u$ is infinite. This is decidable with oracle $\sep{G}$ by \Cref{sep-y-vecinos}. We have proved  the reduction $\sep{G}\geq_{T}\Path{G}$. 

We now assume that $\G$ is a tree, and prove the reduction $\Path{G}\geq_{T}\sep{G}$.
We exhibit a procedure which, given a finite set $E\subset E(\G)$, a
vertex $u$ in $\G\smallsetminus E$ that is incident to some edge
in $E$, and using $\Path{G}$ as oracle, determines if the connected
component of $\G\smallsetminus E$ containing $u$ is infinite. It
is clear how to use this information to check whether $E\in\sep{G}$. The procedure is as follows. Let $C_u$ be the connected component of $\G\smallsetminus E$ containing $u$. We compute the
set $V$ of vertices in  $\G\smallsetminus E$ adjacent to $u$. 
If $V$ is
empty then we conclude that $C_u$ is  finite. If $V$ is
nonempty, then we observe that, by König's infinity Lemma, $C_u$
is infinite if and only if it has some infinite path starting at $u$. These paths are in correspondence
with infinite paths in $\G$ that start at $u$ and then visit
some vertex $v\in V$. This is true because both $C$ and $\G$ are trees. Thus in order to decide whether
$C$ is infinite, it suffices to check whether some of the finitely
many elements $\{uv\mid v\in V\}$ lies in $\Path{G}$. 
\end{proof}

We now present an example illustrating the limitations of \Cref{thm:sep-vs-path}. A finite path $(v_i)_{i=n}^{i=m}$ in a graph is called \textit{geodesic} when its length is minimal among all finite paths with the same initial and final vertex, and an infinite path is geodesic when its restriction to $\{n,\dots,m\}$ is geodesic for all $n,m\in\N$, $n<m$. This concept is fundamental in geometric group theory, see for instance \cite[Chapter 11]{drutu_geometric_2018}.

\begin{prop}\label{graph-with-no-computable-infinite-geodesic-paths}
    There is a highly computable graph $\Lambda$ with one end and with no computable infinite geodesic path. 
    %Moreover, the set of elements in $\Path{\Lambda}$ that can be extended to an infinite geodesic path is not c.e. 
\end{prop}
\begin{proof}
    Let $T$ be a subtree of the rooted binary tree that is connected, highly computable, infinite, and with no computable infinite simple path. The existence of $T$ is proved in \cite{jockusch_pi_1972}. Let $\Lambda$ be the simple graph whose vertex set is the cartesian product $V(T)\times V(T)$, and where two vertices $(u_1,v_1)$ and $(u_2,v_2)$ are neighbors when either (1) $u_1=u_2$ and $v_1$ is adjacent to $v_2$ in $T$, or (2) $v_1=v_2$ and $u_1$ is adjacent to $u_2$ in $T$. If $\Lambda$ had a computable infinite geodesic path, then one could take the projection to the first or second coordinate and obtain a computable infinite simple path on $T$, which is a contradiction.  The elementary verification is left to the reader.  
\end{proof}
\begin{rmk}
Notice that even though the graph $\Lambda$ above does not have any computable geodesic infinite path, it must have \emph{some} computable infinite simple path, by \Cref{paths-is-decidable-for-finitely-many-ends}.  What do such paths look like then? The key observation is that, if we think of a path in $\Lambda$ as a combination of two paths $p_1$ and $p_2$ in the tree $T$, then one can search for an infinite path in $T$ using $p_1$ while keeping $p_2$ at a fixed node, and in case we chose the wrong branch and end up in a leaf of $T$, we can change the node from $p_2$ and then “go back'' with $p_1$ to try a different branch, without formally repeating nodes in $\Lambda$.  
\end{rmk}
We finish this section with some  questions that we were unable to solve. The next question raises naturally from \Cref{thm:sep-vs-path}.
\begin{Q}
    Is it true that $\Path{G}\equiv_T\sep{G}$ for every highly computable and connected graph $G$? %If so, is the Turing reduction uniform on a description of the graph?
\end{Q}
The next questions  
were communicated to us by N. Bitar and V. Salo during the 2024 conference ``Complexity of Simple Dynamical Systems" at CIRM, and arise naturally in the study of symbolic dynamics on groups. A positive answer would allow to strengthen \cite[Proposition 1.4]{aubrun_self-avoiding_2024}.
\begin{question}
    Let $G$ be a highly computable graph with a decidable separation problem. Is it decidable whether a finite path in $G$ can be extended to a two-way infinite path on $G$? 
\end{question}
\begin{question}
    Let $G$ be a highly computable graph, and suppose that $G$ is a Cayley graph of a group with decidable word problem. Is the separation problem for $G$ decidable?  
\end{question}

\section{On the complexity of the Eulerian path problem}\label{sec:complexity-of-Eulerianity}

Erd\H{o}s, Gr\"{u}nwald and V\'{a}zsonyi established the following characterization of infinite \emph{Eulerian} graphs, namely those graphs admitting an Eulerian path \cite{Erdos-et-al}.

\begin{thm}[\cite{Erdos-et-al}] \label{egw}
Let $G$ be a connected graph with countably many edges. 
\begin{enumerate}[(i)]
    \item $G$ admits a one-way infinite Eulerian path if and only if it satisfies the following set of conditions, called $\eulerone$:
    \begin{itemize}
        %\item $E(G)$ is countable and infinite.
        %\item $G$ is connected.
        \item Either $G$ has exactly one vertex with odd degree, or $G$ has at least one vertex with infinite degree and no vertex with odd degree.   
        \item $G$ has one end.
    \end{itemize}
    \item $G$ admits a two-way infinite Eulerian path if and only if it satisfies the following set of conditions, called $\eulertwo$:
    \begin{itemize}
        %\item $E(G)$ is countable and infinite.
        %\item $G$ is connected.
        \item The degree of each vertex is either even or infinite.
        \item $G$ has one or two ends. 
        \item If $E$ is a finite set of edges
        which induces a subgraph where all vertices have even degree, then
        $G\smallsetminus E$ has one infinite connected component.
    \end{itemize}
\end{enumerate}
\end{thm}
A recent and computable proof of this result can be found on \cite{nicanor-on-bean}. Using this characterization, it has been proven in \cite{Kuske-Lohrey} that deciding if a highly computable graph admits a one-way Eulerian path (i.e.~if it satisfies the set of conditions $\eulerone$) is $\Pi_2^0$-complete. This turns out to be the same difficulty of deciding whether a highly computable graph has at most $k$ ends, for any fixed $k > 0$. Also, it is not hard to show that deciding whether a highly computable graph admits a two-way Eulerian path (namely, it satisfies conditions $\eulertwo$) is, again, $\Pi_2^0$-complete (see \Cref{thm:eulerian-path-general} below). Moreover, we point out that in the same paper \cite{Kuske-Lohrey}, it is shown that both the above problems remain $\Pi_2^0$-complete in the case of \emph{automatic} graphs. Therefore, it is natural to ask whether the condition on the number of ends encapsulates most of the hardness of the Eulerian path problem, making this decision task easier if we consider only graphs with the correct number of ends. 
The rest of this section is devoted to confirm that indeed this is the case. 

\begin{remark}
    For simplicity and clarity, so far we have proved all our results in the setting of \emph{simple} graphs, that is, with no loops or multiple edges between vertices allowed. However, when considering Eulerian paths it is natural to allow multigraphs, since for instance the graph with a single vertex and infinitely many loops, admits both a one-way and a two-way infinite Eulerian path. For this reason, in this section we shall consider highly computable multigraphs. Let us then recall that in a multigraph $G$, the degree of the vertex $v$ equals the number of edges incident to $v$, where loops are counted twice. A multigraph $G=(V,E)$ is \emph{computable} when $V(G)$ is a decidable subset of $\N$, and we have a computable function $E(G)\colon V(G)^2\to \N$ such that $E(G)(u,v)$ is the number of edges joining $u$ to $v$ in $G$. The definitions of \emph{highly computable} multigraph, and \emph{highly computable sequence} of multigraphs are a straightforward generalization of the ones for simple graphs.  It is easy to check that all the results proved in the previous section are valid in this more general context, so we will use them without further clarifications.  
\end{remark}

%\subsection{The case of highly computable graphs}
%The aim of this section is to show that knowing the number of ends reduces the complexity of the Eulerian path problem.
%Recall that Eulerian paths can exists only for graphs with one or two ends. Thus, the complexity of this problem is related to the complexity of bounding the number of ends from above, which was shown in \cite{Kuske-Lohrey} to also be $\Pi_2^0$-complete.  

\begin{remark}
We point out that the proofs of \Cref{thm:eulerian-path-general} and \Cref{thm:eulerian-path-1end} below do not introduce significantly new ideas, and are not related to the Separation Problem.  The situation is different for \Cref{thm:hardness-e_2-2fin}: here, the properties of the Separation Problem are the key to prove the upper bound in the complexity of the Eulerian Path problem when the number of ends is known to be exactly 2.
\end{remark}
The next result establishes the complexity of the problem when the number of ends can be one or two, but is not fixed.
\begin{thm}\label{thm:eulerian-path-general}
The one-way Eulerian Path Problem for highly computable graphs is $\Pi_2^0$-complete. This holds even when restricting graphs with at most two ends. The same holds for the case of the two-way Eulerian Path Problem.
\end{thm} 
\begin{proof}
    Let us begin with the upper bounds.
    Since we always deal with connected, countably infinite and locally finite graphs, we can consider the following simplified version of conditions $\eulerone$ in \Cref{egw}, which we call $\eulerone '$:
\begin{enumerate}
    \item $G$ has exactly one vertex of odd degree.
    \item $G$ has one end.
\end{enumerate}
The first condition can be rephrased as the conjunction of the $\Sigma_1^0$ statement \vir {$G$ has at least one vertex of odd degree} and the $\Pi_1^0$-statement \vir{$G$ does not have two vertices of odd degree}: therefore, we have a d.c.e.~statement. %Notice also that this is the only condition to be checked in case we are promised that the given graph has one end. 
Moreover, we have shown that the second condition is $\Pi_2^0$.

Similarly, we can simplify conditions $\eulertwo$ in \Cref{egw} and only consider the following set of conditions, which we call $\eulertwo '$:
\begin{enumerate}
    \item Every vertex in $G$ has even degree.
    \item $G$ has either one end or two ends.
    \item If $E$ is a finite set of edges
        which induces a subgraph where all vertices have even degree, then
        $G\smallsetminus E$ has one infinite connected component.
\end{enumerate}
Observe that the first item is clearly a $\Pi_1^0$ statement, while the second statement is $\Pi_2^0$. Moreover, notice that we can write the last statement as follows: $$(\forall E \Subset E(G))(\exists n\in\N) \, ((\exists v \in G[E]) \, \deg_{G[E]}(v)\text{ is odd}) \lor (\comp_{G,n}(E)\leq 1).$$  The map $E\mapsto \comp_{G,n}(E)$ was defined in  \Cref{prop:comp-is-upper-computable}, where we also proved that it is computable. This fact shows that the last condition is also $\Pi_2^0$. %Finally, it is clear that, if we are promised that we will only deal with graphs with one end, then it only remains to check the first condition. 

Now we prove the lower bounds. 

To prove that deciding whether a highly computable graph admits a one-way Eulerian path is $\Pi_2^0$-complete one only needs to slightly modify the construction used in \Cref{grafo-con-ciclos}: Indeed, it is enough to perform the very same algorithm, with the only difference that now each vertex with non-negative index is joined to its successor by two edges. Thus, at any stage $s$ where $W_e[s] = W_e[s-1]$, we add edges as shown in the picture below.
    \begin{tcolorbox}
        \begin{center}
            \begin{tikzpicture}
                \node at (-1,0) (-1) {$\bullet$};
                \node at (0,0) (0) {$\bullet$};
                \node at (1,0) (1) {$\bullet$};
                \node at (-4,0) (-s) {$\bullet$};
                \node at (4,0) (s) {$\bullet$};
                \node at (-5,0) (-s-1) {$\bullet$};
                \node at (5,0) (s+1) {$\bullet$};
                \node[above = 0cm of -1] {$-1$};
                \node[above = 0cm of 0] {$0$};
                \node[above = 0cm of 1] {$1$};
                \node[above = 0cm of -s] {$-s$};
                \node[above = 0cm of s] {$s$};
                \node[above = 0cm of -s-1] {$-s-1$};
                \node[above = 0cm of s+1] {$\ \ \, s+1$};
                \draw (-1) -- (0);
                \draw (0) .. controls (0.25, 0.5) and (0.75,0.5) .. (1);
                \draw (0) .. controls (0.25, -0.5) and (0.75,-0.5) .. (1);
                \draw (-1) -- (-2,0);
                \draw[dashed] (-2.1,0) -- (-2.9,0);
                \draw (-3,0) -- (-s);
                \draw (1) .. controls (1.25, 0.5) and (1.75,0.5) .. (1.9,0.2);
                \draw (1) .. controls (1.25, -0.5) and (1.75,-0.5) .. (1.9,-0.2);
                \draw[dashed] (2.1,0) -- (2.9,0);
                \draw (3.1,0.2) .. controls (3.25,0.5) and (3.75,0.5) .. (s);
                \draw (3.1,-0.2) .. controls (3.25,-0.5) and (3.75,-0.5) .. (s);
                \draw[blue, thick] (-s) -- (-s-1);
                \draw[blue, thick] (s) .. controls (4.25, 0.5) and (4.75,0.5) .. (s+1);
                \draw[blue, thick] (s) .. controls (4.25, -0.5) and (4.75,-0.5) .. (s+1);
                \draw[dashed] (-s-1) -- (-6,0);
                \draw[dashed] (s+1) -- (6,0);
            \end{tikzpicture}
        \end{center}
    \end{tcolorbox}

    On the other hand, whenever $W_e[s] \ne W_e[s-1]$, the situation will look as in the following picture.
    \begin{tcolorbox}
        \begin{center}
            \begin{tikzpicture}
                \node at (-1,0) (-1) {$\bullet$};
                \node at (0,0) (0) {$\bullet$};
                \node at (1,0) (1) {$\bullet$};
                \node at (-4,0) (-s) {$\bullet$};
                \node at (4,0) (s) {$\bullet$};
                \node at (-5,0) (-s-1) {$\bullet$};
                \node at (5,0) (s+1) {$\bullet$};
                \node[above = 0cm of -1] {$-1$};
                \node[above = 0cm of 0] {$0$};
                \node[above = 0cm of 1] {$1$};
                \node[above = 0cm of -s] {$-s$};
                \node[above = 0cm of s] {$s$};
                \node[above = 0cm of -s-1] {$-s-1$};
                \node[above = 0cm of s+1] {$\ \ \, s+1$};
                \draw (-1) -- (0);
                \draw (0) .. controls (0.25, 0.5) and (0.75,0.5) .. (1);
                \draw (0) .. controls (0.25, -0.5) and (0.75,-0.5) .. (1);
                \draw (-1) -- (-2,0);
                \draw[dashed] (-2.1,0) -- (-2.9,0);
                \draw (-3,0) -- (-s);
                \draw (1) -- (2,0);
                \draw[dashed] (2.1,0) -- (2.9,0);
                \draw (3.1,0.2) .. controls (3.25,0.5) and (3.75,0.5) .. (s);
                \draw (3.1,-0.2) .. controls (3.25,-0.5) and (3.75,-0.5) .. (s);
                \draw[blue, thick] (s) .. controls (4.25, 0.5) and (4.75,0.5) .. (s+1);
                \draw[blue, thick] (s) .. controls (4.25, -0.5) and (4.75,-0.5) .. (s+1);
                \draw[blue, thick] (-s) .. controls (-3,-2) and (3,-2) .. (s);
                \draw[blue, thick] (-s-1) .. controls (-2.7,-2.5) and (2.7,-2.5) .. (s);
                \draw[dashed] (-s-1) -- (-6,0);
                \draw[dashed] (s+1) -- (6,0);
            \end{tikzpicture}
        \end{center}
    \end{tcolorbox}
    
    Therefore, $G_e$ for $e \in \textbf{Fin}$ will look like this:
    \begin{tcolorbox}
        \begin{center}
            \begin{tikzpicture}
                \node at (0,0) (0) {$\bullet$};
                \node [below = 0 cm of 0] {$0$};
                \node at (1,0) (1) {$\bullet$};
                \node [below = 0 cm of 1] {$1$};
                \draw (0) .. controls (0.25, 0.5) and (0.75,0.5) .. (1);
                \draw (0) .. controls (0.25, -0.5) and (0.75,-0.5) .. (1);
                \draw[dashed] (1) -- (1.8,0);
                \node at (2,0) (2) {$\bullet$};
                \node at (3,0) (3) {$\bullet$};
                \node [below = 0 cm of 3] {$s_0$};
                \draw (2) .. controls (2.25, 0.5) and (2.75,0.5) .. (3);
                \draw (2) .. controls (2.25, -0.5) and (2.75,-0.5) .. (3);
                \node at (0.5,0.75) (a) {$\bullet$};
                \node [above = 0 cm of a] {$-1$};
                \draw (0) to[bend left] (a);      
                \node at (2.5,.75) (b) {$\bullet$};
                \node [above = 0 cm of b] {$-s_0$};
                \draw (b) to[bend left] (3);
                \draw (a) -- (1,.75);
                \draw[dashed] (1.2,.75) -- (1.8,.75);
                \draw (2,.75) -- (b);

                \node at (4,0) (4) {$\bullet$};
                %\node [below = 0 cm of 0] {$0$};
                \node at (5,0) (5) {$\bullet$};
                %\node [below = 0 cm of 1] {$1$};
                \draw (4) .. controls (4.25, 0.5) and (4.75,0.5) .. (5);
                \draw (4) .. controls (4.25, -0.5) and (4.75,-0.5) .. (5);
                \draw[dashed] (5) -- (5.8,0);
                \node at (6,0) (6) {$\bullet$};
                \node at (7,0) (7) {$\bullet$};
                \node [below = 0 cm of 7] {$s^*$};
                \draw (6) .. controls (6.25, 0.5) and (6.75,0.5) .. (7);
                \draw (6) .. controls (6.25, -0.5) and (6.75,-0.5) .. (7);
                \node at (4.5,0.75) (c) {$\bullet$};
                %\node [above = 0 cm of a] {$-1$};
                \draw (4) to[bend left] (c);      
                \node at (6.5,.75) (d) {$\bullet$};
                \node [above = 0 cm of d] {$-s^*$};
                \draw (d) to[bend left] (7);
                \draw (c) -- (5,.75);
                \draw[dashed] (5.2,.75) -- (5.8,.75);
                \draw (6,.75) -- (d);

                \draw[dashed] (3) -- (4);

                \node at (8,.5) (e) {$\bullet$};
                \node at (8,-.5) (e') {$\bullet$};

                \draw (7) -- (e);
                \draw (7) to[bend left] (e');
                \draw (7) to[bend right] (e');
                \draw[dashed] (e) -- (8.7,.9);
                \draw[dashed] (e') -- (8.7,-.9);

            \end{tikzpicture}
        \end{center}
    \end{tcolorbox}
    
    While $G_e$ for $e \in \textbf{Inf}$ will be as follows:

    \begin{tcolorbox}
        \begin{center}
            \begin{tikzpicture}

                \node at (0,0) (0) {$\bullet$};
                \node [below = 0 cm of 0] {$0$};
                \node at (1,0) (1) {$\bullet$};
                \node [below = 0 cm of 1] {$1$};
                \draw[<-, red] (0) .. controls (0.25, 0.5) and (0.75,0.5) .. (1);
                \draw[->, blue] (0) .. controls (0.25, -0.5) and (0.75,-0.5) .. (1);
                \draw[dashed] (1) -- (1.8,0);
                \node at (2,0) (2) {$\bullet$};
                \node at (3,0) (3) {$\bullet$};
                \node [below = 0 cm of 3] {$s_0$};
                \draw[<-, red] (2) .. controls (2.25, 0.5) and (2.75,0.5) .. (3);
                \draw[->, blue] (2) .. controls (2.25, -0.5) and (2.75,-0.5) .. (3);
                \node at (0.5,0.75) (a) {$\bullet$};
                \node [above = 0 cm of a] {$-1$};
                \draw[->] (0) to[bend left] (a);      
                \node at (2.5,.75) (b) {$\bullet$};
                \node [above = 0 cm of b] {$-s_0$};
                \draw[->] (b) to[bend left] (3);
                \draw (a) -- (1,.75);
                \draw[dashed] (1.2,.75) -- (1.8,.75);
                \draw (2,.75) -- (b);

                \node at (4,0) (4) {$\bullet$};
                %\node [below = 0 cm of 0] {$0$};
                \node at (5,0) (5) {$\bullet$};
                %\node [below = 0 cm of 1] {$1$};
                \draw[<-, red] (4) .. controls (4.25, 0.5) and (4.75,0.5) .. (5);
                \draw[->, blue] (4) .. controls (4.25, -0.5) and (4.75,-0.5) .. (5);
                \draw[dashed] (5) -- (5.8,0);
                \node at (6,0) (6) {$\bullet$};
                \node at (7,0) (7) {$\bullet$};
                \node [below = 0 cm of 7] {$s_k$};
                \draw[<-, red] (6) .. controls (6.25, 0.5) and (6.75,0.5) .. (7);
                \draw[->, blue] (6) .. controls (6.25, -0.5) and (6.75,-0.5) .. (7);
                \node at (4.5,0.75) (c) {$\bullet$};
                %\node [above = 0 cm of a] {$-1$};
                \draw[->] (4) to[bend left] (c);      
                \node at (6.5,.75) (d) {$\bullet$};
                \node [above = 0 cm of d] {$-s_k$};
                \draw[->] (d) to[bend left] (7);
                \draw (c) -- (5,.75);
                \draw[dashed] (5.2,.75) -- (5.8,.75);
                \draw (6,.75) -- (d);

                \draw[dashed] (3) -- (4);

                \draw[dashed] (7) -- (7.8,0);

            \end{tikzpicture}
        \end{center}
    \end{tcolorbox}

    Observe that, for every $e$, the only vertex with odd degree in $G_e$ is $0$: then $G_e$ admits an infinite Eulerian path if and only if $G_e$ has one end, which happens if and only $e \in \textbf{Inf}$.

    We now turn to the two-way Eulerian Path Problem. To prove its $\Pi_2^0$-completeness, again, it is enough to slightly modify the construction in \Cref{grafo-con-ciclos}: indeed, we consider graphs $H_e$ which are built in the very same way, except that every edge is now duplicated.  We hence obtain a uniformly highly computable sequence of multigraphs $(H_e)_{e \in \N}$ where each $H_e$ has either one or two ends and where each vertex has even degree. We claim that 
    $$\{e: \, H_e \text{ admits a two-way Eulerian path}\} = \{e: \, H_e \text{ has one end}\}.$$
    Notice that this is enough to conclude the proof, as we have seen the set on the right-hand side is $\Pi_2^0$-complete.

    Let us first consider the case where $H_e$ has one end. As every vertex in $H_e$ has even degree, it follows that it verifies $\eulertwo$, so it admits a two-way infinite Eulerian path. For concreteness, let us describe how to obtain such a path. For this we let $s_0=0$, and denote by $s_k$ the $k$-th step in which a new element is enumerated into $W_e$. As we are in the case where $W_e$ is infinite, we have a well-defined infinite sequence $(s_k)_{k \in \N}$. Then each subgraph $H_e[\{v\in V(H_e): \, s_k \le |v| \le s_{k+1}\} \smallsetminus \{-s_k\}$ is even and connected, so by Euler's theorem admits an Eulerian cycle. We can split such a cycle into two disjoint paths from $s_k$ to $s_{k+1}$, shown in blue and in red in the picture below. These paths can be joined to construct the desired two-way Eulerian path.
    \begin{tcolorbox}
        \begin{center}
            \begin{tikzpicture}
                \node at (0,0) (0) {$\bullet$};
                \node at (45:1cm) (1) {$\bullet$};
                \node at (-45:1cm) (-1) {$\bullet$};
                \node at ($(1)+(1,0)$) (2) {$\bullet$};
                \node at ($(-1)+(1,0)$) (-2) {$\bullet$};
                \node at ($(2)+(1,0)$) (3) {$\bullet$};
                \node at ($(-2)+(1,0)$) (-3) {$\bullet$};
                \node at ($(3)+(1,0)$) (4) {$\bullet$};
                \node at ($(-3)+(1,0)$) (-4) {$\bullet$};
                \node at ($(4)+(-45:1cm)$) (5) {$\bullet$};
                \draw[red, thick, ->] (0) to[bend left] (1);
                \draw[red, thick, ->] (1) to[bend left] (2);
                \draw[dashed] (2) -- (3);
                \draw[red, thick, ->] (3) to[bend left] (4);
                \draw[red, thick, ->] (4) to[bend left] (5);
                \draw[blue, thick, ->] (0) to[bend right] (1);
                \draw[blue, thick, ->] (1) to[bend right] (2);
                \draw[blue, thick, ->] (3) to[bend right] (4);
                \draw[blue, thick, ->] (4) to[bend right] (5);
                \draw[blue, thick, ->] (0) to[bend right] (-1);
                \draw[blue, thick, ->] (-1) to[bend right] (-2);
                \draw[dashed] (-2) -- (-3);
                \draw[blue, thick, ->] (-3) to[bend right] (-4);
                \draw[blue, thick, ->] (-4) to[bend right] (5);
                \draw[blue, thick, <-] (0) to[bend left] (-1);
                \draw[blue, thick, <-] (-1) to[bend left] (-2);
                \draw[blue, thick, <-] (-3) to[bend left] (-4);
                \draw[blue, thick, <-] (-4) to[bend left] (5);
                \node[above left = -0.3 cm of 0] {$s_k$};
                \node[above left = -0.3 cm of 1] {$s_k+1$};
                \node[below left = -0.3 cm of -1] {$-(s_k+1)$};
                \node[above right = -0.3 cm of 4] {$s_{k+1}-1$};
                \node[below right = -0.3 cm of -4] {$-s_{k+1}$};
                \node[above right = -0.3 cm of 5] {$s_{k+1}$};
                %\draw[dashed] (0) -- (-180:0.75cm);
                %\draw[dashed] (5) -- ++(0:0.75cm);
            %\end{tikzpicture}

             %\begin{tikzpicture}
                %\node at (0,0) (0') {$\bullet$};
                %\node at (1,0) (1') {$\bullet$};
                \node at ($(5)+(3,0)$) (0') {$\bullet$};
                \node at ($(0')+(1,0)$) (1') {$\bullet$};
                %\draw[red, thick, ->] (0') .. controls (0.25, 0.5) and (0.75,0.5) .. (1');
                \draw[red, thick, ->] (0') .. controls ($(0')+(0.25, 0.5)$) and ($(0')+(0.75,0.5)$) .. (1');
                \draw[blue, thick, ->] (0') to[bend left] (1');
                \draw[blue, thick, <-] (0') to[bend right] (1');
                \draw[blue, thick, ->] (0') .. controls ($(0')+(0.25,-0.5)$) and ($(0')+(0.75,-0.5)$) .. (1');
                %\draw[blue, thick, ->] (0') .. controls (0.25, -0.5) and (0.75,-0.5) .. (1');
                %\node at (2,0) (2') {$\bullet$};
                \node at ($(1')+(1,0)$) (2') {$\bullet$};
                %\draw[red, thick, ->] (1') .. controls (1.25, 0.5) and (1.75,0.5) .. (2');
                \draw[red, thick, ->] (1') .. controls ($(1')+(0.25, 0.5)$) and ($(1')+(0.75,0.5)$) .. (2');
                \draw[blue, thick, ->] (1') to[bend left] (2');
                \draw[blue, thick, <-] (1') to[bend right] (2');
                \draw[blue, thick, ->] (1') .. controls ($(1')+(0.25,-0.5)$) and ($(1')+(0.75,-0.5)$) .. (2');
                %\draw[blue, thick, ->] (1') .. controls (1.25, -0.5) and (1.75,-0.5) .. (2');
                
                \draw[dashed] (2') -- ++(0:0.75cm);
                \node[above = 0 cm of 0'] {$0$};
                \node[above = 0 cm of 1'] {$s_1$};
                \node[above = 0 cm of 2'] {$s_2$};
            \end{tikzpicture}
        \end{center}
    \end{tcolorbox}
    On the other hand, if $H_e$ has two ends, then, by construction, $W_e$ is finite. So, there must be a stage $s_k$ in which some element enters $W_e$ for the last time. Let $E\Subset E(H_e)$ be the set whose elements are the two edges joining $s_k$ to $s_{k+1}$. Then $H_e[E]$ is an even graph, while $H_e\smallsetminus E$ has two infinite connected components, so $H_e$ does not satisfy $\eulertwo$.
\end{proof}

For graphs with exactly one end, we have the following result. 
 \begin{thm} \label{thm:eulerian-path-1end}  Let $\mathcal{G}_{one}$ denote the class of connected one-ended  graphs. Then the following holds:
 \begin{enumerate}
       \item The one-way Eulerian Path Problem over highly computable graphs in $\mathcal{G}_{one}$ is d.c.e.-complete.
        \item The two-way Eulerian Path Problem over highly computable graphs in $\mathcal{G}_{one}$ is $\Pi_1^0$-complete. 
        \item Both the one-way and the two-way Eulerian Path Problems over automatic graphs in $\mathcal{G}_{one}$ are decidable. 
    \end{enumerate}
\end{thm}

\begin{proof}
    We first prove item 1. For the upper bound, it is sufficient to notice that, in case we are promised that the given graph has one end, we only need to check the first condition of $\eulerone'$, which is a d.c.e.~statement.
    
    Now we turn to the lower bound. Let $W$ be a d.c.e.-complete set (the existence of such set has been proved in \cite{Ershov}) and $f: \N^2 \to \N$ be a computable approximation of $W$: since $W$ is d.c.e., we can assume that $f$ changes mind on each element at most twice. Moreover, we can assume that $f(e,0)=0$ for every $e$. 

For every $e$, we define $G_e$ by letting $V(G_e) = \N$. Moreover, we define the edge set of $E(G_e)$ in stages. 
    
    At every stage $s$, we look at the value of $f(e,s)$. While $f(e,s)=0$, we keep adding 2 edges between $s$ and $s+1$. 
    
            \begin{tcolorbox}
        \begin{center}
            \begin{tikzpicture}
                \node at (0,0) (0) {$\bullet$};
                \node [below = 0 cm of 0] {$0$};
                \node at (1,0) (1) {$\bullet$};
                \node [below = 0 cm of 1] {$1$};
                \draw (0) .. controls (0.25, 0.5) and (0.75,0.5) .. (1);
                \draw (0) .. controls (0.25, -0.5) and (0.75,-0.5) .. (1);
                \draw[dashed] (1) -- (1.8,0);
                \node at (2,0) (2) {$\bullet$};
                \node [below = 0 cm of 2] {$s$};
                \node at (3,0) (3) {$\bullet$};
                \node [below = 0 cm of 3] {$s+1$};
                \draw[thick, blue] (2) .. controls (2.25, 0.5) and (2.75,0.5) .. (3);
                \draw[thick, blue] (2) .. controls (2.25, -0.5) and (2.75,-0.5) .. (3);
                \draw[dashed] (3) -- (3.8,0);
            \end{tikzpicture}
        \end{center}
    \end{tcolorbox}
    
    But whenever $f(e,s)=1$, we instead start adding only one edge between $s$ and $s+1$. 
    
        \begin{tcolorbox}
        \begin{center}
            \begin{tikzpicture}
                \node at (0,0) (0) {$\bullet$};
                \node [below = 0 cm of 0] {$0$};
                \node at (1,0) (1) {$\bullet$};
                \node [below = 0 cm of 1] {$1$};
                \draw (0) .. controls (0.25, 0.5) and (0.75,0.5) .. (1);
                \draw (0) .. controls (0.25, -0.5) and (0.75,-0.5) .. (1);
                \draw[dashed] (1) -- (1.8,0);
                \node at (2,0) (2) {$\bullet$};
                \node at (3,0) (3) {$\bullet$};
                \draw (2) .. controls (2.25, 0.5) and (2.75,0.5) .. (3);
                \draw (2) .. controls (2.25, -0.5) and (2.75,-0.5) .. (3);
                \node at (4,0) (4) {$\bullet$};
                \node [below = 0 cm of 3] {$s$};
                \node [below = 0 cm of 4] {$s+1$};
                \draw[thick, blue] (3) -- (4);
                \draw[dashed] (4) -- (4.8,0);
            \end{tikzpicture}
        \end{center}
    \end{tcolorbox}
    
    Finally, it might also happen that $f$ changes its mind again at stage $s$: hence, we go back to adding  2 edges between $s$ and $s+1$.
    
        \begin{tcolorbox}
        \begin{center}
            \begin{tikzpicture}
                \node at (0,0) (0) {$\bullet$};
                \node [below = 0 cm of 0] {$0$};
                \node at (1,0) (1) {$\bullet$};
                \node [below = 0 cm of 1] {$1$};
                \draw (0) .. controls (0.25, 0.5) and (0.75,0.5) .. (1);
                \draw (0) .. controls (0.25, -0.5) and (0.75,-0.5) .. (1);
                \draw[dashed] (1) -- (1.8,0);
                \node at (2,0) (2) {$\bullet$};
                \node at (3,0) (3) {$\bullet$};
                \draw (2) .. controls (2.25, 0.5) and (2.75,0.5) .. (3);
                \draw (2) .. controls (2.25, -0.5) and (2.75,-0.5) .. (3);
                \node at (4,0) (4) {$\bullet$};
                \draw(3) -- (4);
                \draw[dashed] (4) -- (4.8,0);
                \node at (5,0) (5) {$\bullet$};
                \node at (6,0) (6) {$\bullet$};
                \node [below = 0 cm of 6] {$s$};
                \draw (5) -- (6);
                \node at (7,0) (7) {$\bullet$};
                \node [below = 0 cm of 7] {$s+1$};
                \draw[thick, blue] (6) .. controls (6.25, 0.5) and (6.75,0.5) .. (7);
                \draw[thick, blue] (6) .. controls (6.25, -0.5) and (6.75,-0.5) .. (7);
                \draw[dashed] (7) -- (7.8,0);
            \end{tikzpicture}
        \end{center}
    \end{tcolorbox}

    Notice that all $G_e$'s constructed in this way are highly computable and have one end.
    
    Moreover, if $f$ never changes its mind on $e$, then all vertices in $G_e$ have even degree, hence $G_e$ cannot have a one-way infinite Eulerian path. On the other hand, if $f$ changes its mind on $e$ at step $s$ and never changes its mind again, we have that $s$ has odd degree, while all other vertices have even degree. In this case, which is exactly when $e \in W$, $G_e$ admits a one-way infinite Eulerian path.
    
       For example, we have the path described in the picture below: such a path starts at the vertex pointed by the double arrow, crosses our graph \vir{backwards} going through the red edges until it reaches the vertex $0$ and then proceeds forever visiting each black edge in order.

    \begin{tcolorbox}
        \begin{center}

            \begin{tikzpicture}
                \node at (0,0) (0) {$\bullet$};
                \node at (1,0) (1) {$\bullet$};
                \draw[->] (0) .. controls (0.25, 0.5) and (0.75,0.5) .. (1);
                \draw[<-, red] (0) .. controls (0.25, -0.5) and (0.75,-0.5) .. (1);
                \draw[dashed] (1) -- (1.8,0);
                \node at (2,0) (2) {$\bullet$};
                \node at (3,0) (3) {$\bullet$};
                \draw[->] (2) .. controls (2.25, 0.5) and (2.75,0.5) .. (3);
                \draw[<-, red] (2) .. controls (2.25, -0.5) and (2.75,-0.5) .. (3);
                \node at (4,0) (4) {$\bullet$};
                \draw[->] (3) -- (4);
                \node at (5,0) (5) {$\bullet$};
                \draw[->] (4) -- (5);
                \draw[dashed] (5) -- (5.8,0);
                \draw[->>] (3,-.5) -- (3);
            \end{tikzpicture}
        \end{center}
    \end{tcolorbox} 
    
    Finally, if $f$ changes its mind twice on $e$, say at stages $s$ and $s'$, both vertices $s$ and $s'$ have odd degree: hence $G_e$ does not admit Eulerian paths.

Now let's prove item 2. The upper bound is again straightforward: when we deal only with graphs with one end, we only need to check the first condition of $\eulertwo '$, which is clearly a $\Pi_1^0$ statement.

To prove the $\Pi_1^0$-completeness, let us fix a computable enumeration $(\varphi_{e})_{e \in \N}$ and define the highly computable sequence of multigraphs $(G_e)_{e \in \N}$ as follows. For every $e$, we let $V(G_e) = \N$. Then, we always add two edges between $s$ and $s+1$, unless $\varphi_e$ halts in exactly $s$ steps, in which case we only add one edge. Thus, we have the following two cases:

    \begin{tcolorbox}
        \begin{center}

            \begin{tikzpicture}
                \node at (0,0) (0) {$\bullet$};
                \node at (1,0) (1) {$\bullet$};
                \draw[red, thick, ->] (0) .. controls (0.25, 0.5) and (0.75,0.5) .. (1);
                \draw[blue, thick, ->] (0) .. controls (0.25, -0.5) and (0.75,-0.5) .. (1);
                \node at (2,0) (2) {$\bullet$};
                \draw[blue, thick, ->] (1) .. controls (1.25, 0.5) and (1.75,0.5) .. (2);
                \draw[red, thick, ->] (1) .. controls (1.25, -0.5) and (1.75,-0.5) .. (2);
                
                \draw[dashed] (2) -- (2.8,0);
                \node at (1.4, -1) {$G_e$ for $\varphi_e \uparrow$};
            \end{tikzpicture}
            \qquad
             \begin{tikzpicture}
                \node at (0,0) (0) {$\bullet$};
                \node at (1,0) (1) {$\bullet$};
                \draw (0) .. controls (0.25, 0.5) and (0.75,0.5) .. (1);
                \draw (0) .. controls (0.25, -0.5) and (0.75,-0.5) .. (1);

                 \draw[dashed] (1) -- (1.8,0);
                \node at (2,0) (2) {$\bullet$};
                \node at (3,0) (3) {$\bullet$};
                
                \draw (2) .. controls (2.25, 0.5) and (2.75,0.5) .. (3);
                \draw (2) .. controls (2.25, -0.5) and (2.75,-0.5) .. (3);
                \node at (4,0) (4) {$\bullet$};
                \draw (3) -- (4);
                \node at (5,0) (5) {$\bullet$};
                \draw (4) .. controls (4.25, 0.5) and (4.75,0.5) .. (5);
                \draw (4) .. controls (4.25, -0.5) and (4.75,-0.5) .. (5);
                \node [below = 0 cm of 3] {$s$};

                \draw[dashed] (5) -- (5.8,0);
                \node at (2.9, -1) {$G_e$ for $\varphi_e \downarrow$ in $s$ steps};
            \end{tikzpicture}
        \end{center}
    \end{tcolorbox}  
    It is clear that the vertices $s$ and $s+1$ in $G_e$ have odd degree if and only if $\varphi_{e}$ halts in exactly $s$ steps. It follows that $G_e$ admits a two-way Eulerian path if and only if $e \in \overline{\HP}$.

Finally, we turn to the automatic case (item 3).

Automatic graphs have a decidable first order theory. Even more, the sets that can be defined by first order formulas are regular with respect to an automatic presentation  \cite{khoussainov_automatic_1995}. This proves the decidability of the first order theory, as the emptiness problem for regular languages is decidable. Several works show that we can enrich the first order theory with  counting quantifiers such as  ``there are $k$ mod $n$'', and ``there are infinitely many'', and that the sets defined with these formulas are still regular. As a direct consequence of \cite[Propositions 3.5 and 3.6]{kuske_where_2011}, we get the following result.

\begin{lem} \label{lem:automatic}
    There is an algorithm which, given an automatic graph $G$, an automatic presentation for $(V(G),R(G))$, and a sentence written with first order quantifiers plus quantifiers of the form $\exists^{\text{even}}$ (there is an even number), $\exists^{\text{odd}}$ (there is an odd number), and $\exists^{\infty}$ (there is an infinite number),  returns the truth value of the sentence. 
\end{lem}

Given this premise, item 3 follows easily. 
Indeed, by \Cref{egw}, we know that a graph $G$ admits a one-way infinite Eulerian path if and only if either $G$ has exactly one vertex whose degree is odd, or $G$ has at least one vertex with infinite degree and no vertex with odd degree. Similarly, $G$ admits a two-way infinite Eulerian path if and only if all vertices have either even or infinite degree. Both these conditions can be written as sentences using only the kind of quantifiers allowed in \Cref{lem:automatic}, so they are decidable given the automatic presentation of $G$, in case $G$ is automatic. 
\end{proof}
We now turn to the case of graphs having exactly two ends. Interestingly, we find that in this case the existence of a two-way Eulerian path exhausts exactly the $m$-degrees of $\Delta_2^0$ sets. We note that, in order to prove the $\Delta_2^0$ upper bound, we strongly rely on the properties of the Separation Problem. 
\begin{thm} \label{thm:hardness-e_2-2fin} Let $\mathcal{G}_{two}$
denote the class of connected two-ended  highly computable graphs. Then the two-way Eulerian Path Problem over $\mathcal{G}_{two}$ is $\Delta_2^0$. Moreover, for every $\Delta_2^0$ set $X\subset \N$ there is a uniformly highly computable family of graphs $(G_e)_{e\in\N}$ with two ends, such that 
    $$X=\{e\in \N \mid G_e \ \text{admits a two-way infinite Eulerian path}\}.$$ 
\end{thm}
\begin{proof}
    We start by proving the upper bound. Let $(G_e)_{e \in \N}$ be a uniformly highly computable family of connected graphs with two ends. Due to the above restrictions on the graphs of such family, conditions $\eulertwo$ from \Cref{egw} are greatly simplified, and we are left with verifying the following set of conditions, which we call $\eulertwo^{\star}$:
    \begin{enumerate}
        \item $(\forall v \in V(G)) \ \deg_G(v) \ \text{is even}$.
        \item $(\forall E \Subset E(G)) \ (\forall v \in V(G[E]) \, \deg_{G[E]}(v) \ \text{is even}) \implies \comp_G(E) = 1$.
    \end{enumerate}
     A Turing reduction which decides whether $G_e$ satisfies $\eulertwo^{\star}$ using $\HP$ is the following. We start by verifying that every vertex in $G_e$ has even degree. This is a $\Pi_1^0$ condition and, hence, decidable with oracle $\HP$. Now our task is to verify that $G_e \smallsetminus E$ has a single infinite connected component, for every set of edges $E$ that induces an even subgraph of $G_e$. For this purpose we compute a set $S\in\sep{G_e}$, which is possible with oracle $\HP$ by \Cref{complexity-of-sep-uniform-version}. Now let $\varphi$ be the function from \Cref{thm:magic_function}. By the Recursion Theorem, from the index $e$ we can compute an index $e'$ of a Turing machine which runs $\varphi(G_e,S,2,E)$ for every finite set of edges $E$ that induces an even subgraph of $G_e$, and that halts whenever it finds some $E$ such that $\varphi(G_e,S,2,E)\geq 2$. Then $\varphi_{e'}$ halts if and only if $G_e$ has a finite set of edges $E$ for which $G_e \smallsetminus E$ has more than one infinite connected component, namely if and only if $G_e$ does not satisfy $\eulertwo^{\star}$. This proves that, in order to check whether $G_e$ satisfies $\eulertwo^{\star}$, it suffices to ask whether $e'\in \HP$. This concludes the desired Turing reduction.

     We now sketch the proof of the fact that, given any $\Delta_2^0$ set $X$, there is a uniformly highly computable sequence of graphs $(G_e)_{e \in \N}$ with 
     \[\{e: \ G_e \ \text{admits a two-way Eulerian path} \, \}=X.\] The construction is similar in spirit to the one used in \Cref{grafo-con-ciclos}. Let $x(e,s)$ be a computable approximation to the characteristic sequence of $X$. Without loss of generality, we can assume that $x(e,-1)=x(e,0)=0$ for every $e$. Now, for every $e$, we let $V(G_e) = \Z$, while the set of edges is defined by the following effective procedure:
     \begin{enumerate}
    \setcounter{enumi}{-1}
        \item We begin by adding two edges between $0, 1$ and $0, -1$. 
        \item We keep adding two edges between both $s, s+1$ and $-s, -(s+1)$ until we see that $x(e,s-1)=x(e,s)=0$ (that is, our computable approximation believes that $e \notin X$), so that the graphs looks like a \vir{chain}, namely a line where two adjacent vertices are joined by two edges.
    \end{enumerate}
    \begin{tcolorbox}
        \begin{center}
            \begin{tikzpicture}
                \node at (-1,0) (-1) {$\bullet$};
                \node at (0,0) (0) {$\bullet$};
                \node at (1,0) (1) {$\bullet$};
                \node at (-4,0) (-s) {$\bullet$};
                \node at (4,0) (s) {$\bullet$};
                \node at (-5,0) (-s-1) {$\bullet$};
                \node at (5,0) (s+1) {$\bullet$};
                \node[above =0.1cm of -1] {$-1$};
                \node[above =0.1cm of 0] {$0$};
                \node[above =0.1cm of 1]{$1$};
                \node[above =0.1cm of -s] {$-s$};
                \node[above =0.1 cm of s]{$s$};
                \node[above =0.1cm of -s-1] {$-s-1$};
                \node[above =0.1cm of s+1] {$s+1$};
                \draw (-1) .. controls (-0.75, 0.5) and (-0.25,0.5) .. (0);
                \draw (-1) .. controls (-0.75, -0.5) and (-0.25,-0.5) .. (0);
                \draw (0) .. controls (0.25, 0.5) and (0.75,0.5) .. (1);
                \draw (0) .. controls (0.25, -0.5) and (0.75,-0.5) .. (1);
                \draw (-1) .. controls (-1.25, 0.5) and (-1.75,0.5) .. (-1.9,0.2);
                \draw (-1) .. controls (-1.25, -0.5) and (-1.75,-0.5) .. (-1.9,-0.2);
                \draw[dashed] (-2.1,0) -- (-2.9,0);
                \draw (-3.1,0.2) .. controls (-3.25, 0.5) and (-3.75,0.5) .. (-s);
                \draw (-3.1,-0.2) .. controls (-3.25, -0.5) and (-3.75,-0.5) .. (-s);
                \draw (1) .. controls (1.25, 0.5) and (1.75,0.5) .. (1.9,0.2);
                \draw (1) .. controls (1.25, -0.5) and (1.75,-0.5) .. (1.9,-0.2);
                \draw[dashed] (2.1,0) -- (2.9,0);
                \draw (3.1,0.2) .. controls (3.25,0.5) and (3.75,0.5) .. (s);
                \draw (3.1,-0.2) .. controls (3.25,-0.5) and (3.75,-0.5) .. (s);
                \draw[blue, thick] (-s) .. controls (-4.25, 0.5) and (-4.75,0.5) .. (-s-1);
                \draw[blue, thick] (-s) .. controls (-4.25, -0.5) and (-4.75,-0.5) .. (-s-1);
                \draw[blue, thick] (s) .. controls (4.25, 0.5) and (4.75,0.5) .. (s+1);
                \draw[blue, thick] (s) .. controls (4.25, -0.5) and (4.75,-0.5) .. (s+1);
                \draw[dashed] (-s-1) -- (-6,0);
                \draw[dashed] (s+1) -- (6,0);
            \end{tikzpicture}
        \end{center}
    \end{tcolorbox}
    \begin{enumerate}
    \setcounter{enumi}{1}
        \item If at stage $s$ we see that $x(e,s-1)=0$ and $x(e,s)=1$ (namely, our approximation changes its mind and puts $e$ into $X$), we add two edges between $-s$ and $s$, then we add one edge between both $s,s+1$ and $s, -(s+1)$, while putting no edge between $-s$ and $-(s+1)$. In this way, we create a cycle graph with vertex set $\{-s, \dots, s\}$, where each pair of adjacent vertices is joined by two edges.
    \end{enumerate}
    \begin{tcolorbox}
        \begin{center}
            \begin{tikzpicture}
                \node at (0,0) (s) {$\bullet$};
                \node at (.75,0.5) (s-1) {$\bullet$};
                \node at (-.75,0.5) (-s) {$\bullet$};
                \draw (s) to[bend left] (s-1); 
                \draw (s) to[bend right] (s-1); 
                \draw[blue, thick] (s) to[bend left] (-s); 
                \draw[blue, thick] (s) to[bend right] (-s); 
                \node at (0.75,1.25) (1) {$\bullet$};
                \node at (-0.75,1.25) (-1) {$\bullet$};
                \node at (0,1.75) (0) {$\bullet$};
                \draw (-1) to[bend left] (0); 
                \draw (-1) to[bend right] (0);
                \draw (1) to[bend left] (0); 
                \draw (1) to[bend right] (0); 
                \draw[dashed] (s-1) to[bend right] (1);
                \draw[dashed] (-s) to[bend left] (-1);
                \node at (1,0) (s+1) {$\bullet$};
                \node at (-1,0) (-s-1) {$\bullet$};
                \draw[blue, thick] (s) -- (s+1);
                \draw[blue, thick] (s) -- (-s-1);
                \node[below =0cm of s] {$s$};
                \node[below =0cm of s+1] {$s+1$};
                \node[below =0cm of -s-1] {$-s-1$};
                \node[left=0cm of -s] {$-s$};
                \node[right=0cm of s-1] {$s-1$};
                \node[above =0cm of 0] {$0$};
                \node[right=0cm of 1] {$1$};
                \node[left=0cm of -1] {$-1$};
                \draw[dashed] (-s-1) -- (-2,0);
                \draw[dashed] (s+1) -- (2,0);
            \end{tikzpicture}
        \end{center}
    \end{tcolorbox}
    \begin{enumerate}
    \setcounter{enumi}{2}
        \item We keep adding one edge between both $s, s+1$ and $-s, -(s+1)$ until we see that $x(e,s-1)=x(e,s)=1$ (that is, our computable approximation believes that $e \in X$), forming a line.
    \end{enumerate}
    \begin{tcolorbox}
        \begin{center}
            \begin{tikzpicture}
                \node at (0,0) (0) {$\bullet$};
                \node at (.75,0.5) (0') {$\bullet$};
                \node at (-.75,0.5) (0'') {$\bullet$};
                \draw (0) to[bend left] (0'); 
                \draw (0) to[bend right] (0'); 
                \draw (0) to[bend left] (0''); 
                \draw (0) to[bend right] (0''); 
                \node at (0.75,1.25) (1') {$\bullet$};
                \node at (-0.75,1.25) (1'') {$\bullet$};
                \node at (0,1.75) (1''') {$\bullet$};
                \draw (1') to[bend left] (1'''); 
                \draw (1') to[bend right] (1''');
                \draw (1'') to[bend left] (1'''); 
                \draw (1'') to[bend right] (1'''); 
                \draw[dashed] (0') to[bend right] (1');
                \draw[dashed] (0'') to[bend left] (1'');
                \node at (1,0) (1) {$\bullet$};
                \node at (-1,0) (-1) {$\bullet$};
                \draw (0) -- (1);
                \draw (0) -- (-1);
                \node at (2,0) (s) {$\bullet$};
                \node[above =0.1 cm of s] {$s$};
                \node at (-2,0) (-s) {$\bullet$};
                \node[above =0.1 cm  of -s] {$-s$};
                \draw[dashed] (1) -- (s);
                \draw[dashed] (-1) -- (-s);
                \node at (3,0) (s+1) {$\bullet$};
                \node[above =0.1 cm of s+1] {$s+1$};
                \node at (-3,0) (-s-1) {$\bullet$};
                \node[above =0.1 cm of -s-1] {$-s-1$};
                \draw[blue, thick] (s) -- (s+1);
                \draw[blue, thick] (-s) -- (-s-1);
                \draw[dashed] (s+1) -- (4,0);
                \draw[dashed] (-s-1) -- (-4,0);
            \end{tikzpicture}
        \end{center}
    \end{tcolorbox}
    \begin{enumerate}
    \setcounter{enumi}{3}
        \item If at stage $s$ we see that $x(e,s-1)=1$ and $x(e,s)=0$ (namely, our approximation changes its mind again and removes $e$ from $X$), we add one edge between $-s$ and $s$, then we add two edges between both $s,s+1$ and $s, -(s+1)$, while putting no edge between $-s$ and $-(s+1)$. So, we form a new cycle, this time with only one edge joining two adjacent vertices. 
    \end{enumerate}
    \begin{tcolorbox}
        \begin{center}
\begin{tikzpicture}[rotate=90]
                \node at (0,0) (0) {$\bullet$};
                \node[right=0cm of 0] {$s$};
                \node at (.75,0.5) (0') {$\bullet$};
                \node[above =0cm of 0'] {$s-1$};
                \node at (-.75,0.5) (0'') {$\bullet$}; 
                \node[below =0cm of 0''] {$-s$};
                \draw (0) to[bend right] (0'); 
                \draw[blue, thick] (0) to[bend left] (0''); 
                \node at (0.75,1.25) (0''') {$\bullet$};
                \node at (-0.75,1.25) (0'''') {$\bullet$};
                \draw[dashed] (0') to[bend right] (0''');
                \draw[dashed] (0'') to[bend left] (0'''');
                \node at (0,1.75) (0''''') {$\bullet$};
                \draw (0''') to[bend right] (0''''');
                \draw (0'''') to[bend left] (0''''');
                \node at (.75,2.25) (1') {$\bullet$};
                \node at (-.75,2.25) (1'') {$\bullet$};
                \draw (0''''') to[bend left] (1'); 
                \draw (0''''') to[bend right] (1'); 
                \draw (0''''') to[bend left] (1''); 
                \draw (0''''') to[bend right] (1'');
                \node at (.75,3) (1''') {$\bullet$};
                \node at (-.75,3) (1'''') {$\bullet$};
                \node at (0,3.5) (1''''') {$\bullet$};
                \draw[dashed] (1') to[bend right] (1''');
                \draw[dashed] (1'') to[bend left] (1'''');
                \draw (1''') to[bend right] (1''''');
                \draw (1''') to[bend left] (1''''');
                \draw (1'''') to[bend right] (1''''');
                \draw (1'''') to[bend left] (1''''');
                \node at (-.75,-.5) (-1) {$\bullet$};
                \node[right=0cm of -1] {$-s-1$};
                \node at (.75,-.5) (1) {$\bullet$};
                \node[right=0cm of 1] {$s+1$};
                \draw[blue, thick] (0) to[bend right] (1);
                \draw[blue, thick] (0) to[bend left] (1);
                \draw[blue, thick] (0) to[bend right] (-1);
                \draw[blue, thick] (0) to[bend left] (-1);
                \draw[dashed] (1) -- (1.5,-1);
                \draw[dashed] (-1) -- (-1.5,-1);
            \end{tikzpicture}
        \end{center}
    \end{tcolorbox}
    \begin{enumerate}
    \setcounter{enumi}{4}
        \item We restart from item (1).
    \end{enumerate}

    Notice that the graph $G_e$ constructed in such a way always has two ends. Moreover, it is easy to check that $G_e$ satisfies $\eulertwo^{\star}$ if and only if $e \in X$.

     Let $H_1$ be the graph with vertex set $\Z$ and where each pair of two consecutive vertices $n, n+1$ is joined by a single edge, and let $H_2$ be the graph with vertex set $\Z$ and with each two consecutives vertices $n, n+1$ are joined by exactly two edges: notice that $H_1$ admits a two-way infinite Eulerian path, while $H_2$ does not. We will construct each $G_e$ in such a way that, up to a finite even subgraph, $G_e=H_1$ for $e\in X$, and $G_e=H_2$ for $e\not\in X$. Now, by Theorem \ref{shoenfield}, we know there exists a computable approximation $x$ of $X$. One then starts to build $G_e$ by copying the graph $H_2$ as long as $x$ believes that $e$ does not belong to $X$. Whenever $x$ changes its mind on $e$, we create a cycle and start copying the graph $H_1$ instead as long as $x$ believes that $e$ does belong to $X$. We then continue in this fashion: since $X$ is a $\Delta_2^0$ set we are sure that $x$ eventually stabilizes on $e$, meaning that $G_e$ will eventually look (up to a finite even subgraph) like $H_1$ in case $e \in X$, or like $H_2$ otherwise.
    \end{proof}
\section*{Acknowledgements}
N. Carrasco-Vargas was supported by a grant from the Priority Research Area SciMat under the Strategic Programme Excellence Initiative at Jagiellonian University. V. Delle Rose was supported by the project PRIN 2022 “Logical Methods in Com-
binatorics” No. 2022BXH4R5 of the Italian Ministry of University and Research
(MIUR). C. Rojas was supported by ANID/FONDECYT Regular 1230469 and
ANID/Basal National Center for Artificial Intelligence CENIA FB210017
\bibliographystyle{abbrv}
\bibliography{sep}
\end{document}